\theoremstyle{plain}
\newtheorem{theorem}{Theorem}[section]
\newtheorem{lemma}[theorem]{Lemma}
\newtheorem{corollary}[theorem]{Corollary}
\newtheorem{prop}[theorem]{Proposition}
\theoremstyle{definition}
\newtheorem{remark}[theorem]{Remark}
\newtheorem{definition}[theorem]{Definition}
\newtheorem{example}[theorem]{Example}
\newcommand{\NN}{\mathbb{N}}
\newcommand{\RR}{\mathbb{R}}
\newcommand{\TT}{\mathbb{T}}
\newcommand{\ZZ}{\mathbb{Z}}
\newcommand{\cC}{\mathcal{C}}
\newcommand{\cO}{\mathcal{O}}
\newcommand{\ee}{\mathrm{e}}
\newcommand{\ii}{\mathrm{i}}
\newcommand{\leb}{\lambda_{_\mathrm{L}}}
\newcommand{\ts}{\hspace{0.5pt}}
\newcommand{\nts}{\hspace{-0.5pt}}
\newcommand{\dd}{\,\mathrm{d}}
\newcommand{\exend}{\hfill $\Diamond$}
\newcommand{\defeq}{\mathrel{\mathop:}=}
\begin{document}

\title[On a family of singular continuous measures]
{On a family of singular continuous measures\\[2mm]
  related to the doubling map} 

\author[M. Baake]{Michael Baake}
\author[M. Coons]{Michael Coons}
\author[J. Evans]{James Evans}
\author[P. Gohlke]{Philipp Gohlke}

\address{Fakult\"{a}t f\"{u}r Mathematik,
  Universit\"{a}t Bielefeld,\newline \hspace*{\parindent}Postfach
  100131, 33501 Bielefeld, Germany}
\email{mbaake@math.uni-bielefeld.de,pgohlke@math.uni-bielefeld.de}

\address{School of Mathematical and Phys.~Sciences, University of
  Newcastle, \newline \hspace*{\parindent}University Drive, Callaghan
  NSW 2308, Australia}
\email{michael.coons@newcastle.edu.au,james.evans10@uon.edu.au}


\subjclass[2010]{Primary 60B05, 37E05,
                           Secondary 11B37, 52C23}
\keywords{Riesz products, doubling map, Fourier analysis, 
  singular continuous measures, \newline \indent
  $\ts g$-measures, recurrences, hyperuniformity, scaling laws}

\begin{abstract}
  Here, we study some measures that can be represented by infinite
  Riesz products of $1$-periodic functions and are related to the
  doubling map. We show that these measures are purely singular
  continuous with respect to Lebesgue measure and that their
  distribution functions satisfy super-polynomial asymptotics near the
  origin, thus providing a family of extremal examples of singular
  measures, including the Thue{\ts}--Morse measure.
\end{abstract}

\maketitle

\vspace*{-2pt}

\section{Introduction}

The Lebesgue decomposition theorem states that any positive regular
Borel measure $\mu$ on $\RR^n$ has a unique decomposition
$\mu=\mu_{\mathsf{pp}}+\mu_{\mathsf{ac}}+\mu_{\mathsf{sc}}$ relative
to Lebesgue measure, where $\mu_{\mathsf{pp}}$, $\mu_{\mathsf{ac}}$
and $\mu_{\mathsf{sc}}$ are mutually singular as measures.  Here,
$\mu_{\mathsf{pp}}$ is pure point (the Bragg part), while
$\mu_{\mathsf{ac}}$ is an absolutely continuous and
$\mu_{\mathsf{sc}}$ is a singular continuous measure.  We call a
measure \emph{pure} if it has only one of these parts. For example,
with respect to Lebesgue measure, a Dirac measure is purely pure
point, and Lebesgue measure is purely absolutely continuous. Purely
singular continuous measures often arise in the study of dynamical
systems, such as those associated to constant-length substitutions;
see \cite{Q,AS,TAO} for general background.

Since the work by Wiener \cite{Wiener}, the spectrum of a sequence has
been identified as an important quantity, and Mahler \cite{Mahler}
studied the first example of singular continuity, the
\emph{Thue--Morse} (TM) measure, by which we mean, in modern
terminology, the diffraction measure\footnote{This should not be
  confused with the unique, ergodic probability measure on the TM
  shift space (the orbit closure of the TM sequence), which is
  sometimes also called TM measure in the literature.}
$\widehat{\ts\ts\gamma^{}_{t}\ts\ts}$ of the classic TM sequence over
the alphabet $\{-1,1\}$; see \cite{Selecta} for some historical
background. This measure is $1$-periodic, and its restriction to the
fundamental domain $[0,1)$ can be identified with the (dynamical)
spectral measure of maximal type in the orthocomplement of the pure
point sector of the TM shift dynamical system; see \cite{TAO, Q} for
details.  Recall that the TM sequence is the bi-infinite sequence
given by $t(0)=1$, for non-negative $n$ by the recursions $t(2n)=t(n)$
and $t(2n+1)=-t(n)$, and for negative integers by $t(n)=t(-n-1)$. The
corresponding diffraction measure,
$\widehat{\ts\ts\gamma^{}_{t}\ts\ts}$, is the Fourier transform of the
auto\-correlation measure $\gamma^{}_t$,
\[
  \gamma^{}_{t}  \, =
  \sum_{m\in\ZZ} \eta^{}_t(m) \, \delta_{m} \ts ,
\]  
where $\eta^{}_t (m)$ is the volume-averaged \emph{autocorrelation
coefficient}
\[
  \eta^{}_t (m) \, = \lim_{N\to\infty}\frac{1}{2N+1}
  \sum_{n=-N}^N \!  t(n) \, t(n+m) \ts .
\]
These coefficients satisfy $\eta^{}_{t} (0)=1$ and
$\eta^{}_{t} (-m) = \eta^{}_{t} (m)$ for $m\in\NN$, as well as the
recursions $\eta^{}_{t} (2 m) = \eta^{}_{t} (m)$ and
$\eta^{}_{t} (2 m + 1) = - \frac{1}{2} \bigl( \eta^{}_{t} (m) +
\eta^{}_{t} (m+1)\bigr)$ for $m\in\NN_0$; see \cite[Sec.~10.1]{TAO}
and references therein. Perhaps the most interesting property of
$\widehat{\ts\ts\gamma^{}_{t} \ts\ts}$ in our context is the fact that
it can be represented as an infinite Riesz product; see
\cite[Ch.~V.7]{Zygmund} for general background on such measures.
Indeed, as a measure on $\RR$, one has
\[
   \widehat{\, \gamma^{}_{t} \,}  =  \ts \prod_{\ell\geqslant 0}
   \bigl( 1-\cos(2^{\ell+1}\pi (.) )\bigr) ,
\]
which is to be understood as the limit of a vaguely converging
sequence of absolutely continuous measures on $\RR$; see
\cite[Sec.~10.1]{TAO} for a detailed discussion, which is based on the
original work by Mahler and Kakutani, as well as \cite{GL,BGKS,BG} for
results on its scaling properties.  Since this measure is
$1$-periodic, hence of the form
$\widehat{\ts\ts\gamma^{}_{t}\ts\ts} = \mu_{_\mathrm{TM}} \! *
\delta^{}_{\ZZ}$ with
$\mu_{_\mathrm{TM}} \nts = \widehat{\ts\ts\gamma^{}_{t}\ts\ts}\ts
|^{}_{[0,1)}$, it is natural and more convenient to view it as a
finite measure on the $1$-torus, where it is a probability measure,
and work with weak convergence. This is our point of view from now on,
where the autocorrelation coefficients $\eta^{}_{t} (m)$ agree with
the Fourier--Stieltjes coefficients of the measure
$\mu_{_\mathrm{TM}}$ on $\TT$. This simplifies various steps from a
technical perspective, and is perfectly adequate for a complete study,
including hyperuniformity aspects \cite{GL,TS}, which have recently
gained importance in the physical sciences \cite{Aubry,Josh1,BG,Josh2}
and beyond \cite{Grabner,Grabner-2,BC-2,RS}.

Taking the above infinite Riesz product as a starting point, we
investigate the spectral properties of measures that can be
represented as infinite Riesz products
\[
   \prod_{\ell\geqslant 0} h(2^\ell x) \ts ,
\]   
where $h$ is a non-negative continuous function that is $1$-periodic.
Such products are connected with the \emph{doubling map}
$x \mapsto Tx \defeq 2 \ts x$ on the $1$-torus, $\TT$, the latter
represented as $[0,1)$ with addition modulo $1$.  We denote the
corresponding topological dynamical system by $(\TT, T)$.  This
doubling system, from a spectral perspective, is more complicated than
successive multiplication by an integer $\geqslant 3$; see \cite{BC}
for a related example with Stern's diatomic sequence. Another
difficulty emerges from the observation that the Fourier--Stieltjes
coefficients of a general Riesz product do not have such a simple
recursive structure as those of $\mu_{_\mathrm{TM}}$.

To deal with this class systematically, we will thus assume some
additional symmetry conditions on $h$. In particular, we require
$g\defeq h/2$ to be a \emph{$g$-function} in the sense of Keane
\cite{Keane}, where we have
$g (x) + g \bigl( x + \frac{1}{2} \bigr) = 1$ for all $x\in\TT$.
Under this condition, the corresponding Riesz product (if it exists as
a vague limit) is in fact a \emph{g-measure}. Since the seminal work
of Keane \cite{Keane}, $g$-measures have played an important role in
the development of the thermodynamic formalism, pioneered by work of
Ruelle \cite{Ruelle}, Ledrappier \cite{Ledrappier}, Walters
\cite{Walters} and Bowen \cite{Bowen}, to name just a few.  They are
also intimately connected to a class of stochastic processes, known as
\emph{chains with infinite connections} \cite{DF37} or \emph{chains of
  infinite order} \cite{Harris}. We refer to
\cite{BDEG,FGP20,JOP10,Stenflo} and references therein for more on
probabilistic aspects of $g$-measures.  The methods developed in the
context of $g$-measures have found applications in fields such as
diffraction \cite{Keane}, wavelets \cite{CDF92, ConzeRaugi90, FL98},
multifractal analysis \cite{BGKS,Fan97,Olivier99} and learning models
\cite{BurtonKeller93}. More general Riesz products than those
presented above may fall into the class of \emph{$G$-measures}, a
generalisation of $g$-measures that was introduced by Brown and Dooley
in \cite{BD91}; see also \cite{Fan96}.

Given a $g$-function, the question under which condition there is a
\emph{unique} associated $g$-measure that is also the vague limit of
the corresponding Riesz product has attracted considerable attention;
compare \cite{FGP20,JOP10,Walters} among many others. In most cases,
it is assumed that $g$ is \emph{strictly} positive, notable exceptions
are \cite{ConzeRaugi90, Keane, Ledrappier}. Since $g(0) = 0$ for the
TM measure and since the hyperuniformity of the TM sequence depends on
this property, we do \emph{not} want to make such a restriction. For a
few examples of $g$-functions with zeros, illustrating that there may
or may not be a unique $g$-measure, we refer to \cite[Sec.~4]{Keane}
and \cite[Sec.~VII]{ConzeRaugi90}.

In this note, we will not assume prior knowledge on $g$-measures in
order to make it more accessible for readers coming from a
number-theoretic angle. In particular, all the relevant notions will
be introduced in Section~\ref{sec:mug} and the exposition is mostly
self-contained. Some basic results on $g$-measures that by now can be
considered folklore will be proved in an elementary fashion for the
reader's convenience. We point to the relevant literature as we go
along. In the following, we collect some of this `folklore' and state
it in two theorems for easier reference.
 
Here, we denote the space of continuous functions on $\TT$ by
$\cC (\TT)$.  The first theorem builds on a powerful result from
\cite{Keane}; compare \cite{ConzeRaugi90} for an alternative
  approach.

\begin{theorem}\label{thm:mug}
  Let\/ $g \in \cC (\TT)$ be a\/ $g$-function of summable
  variation,\footnote{See Definition~\ref{def:bd} below for this
    notion.}  and assume that one of the following properties holds.
\begin{enumerate}\itemsep=2pt
\item The function\/ $g$ has at most one zero in\/ $\TT$;
\item The function\/ $g$ has only finitely many zeros in\/ $\TT$, none
  of which wanders into a periodic orbit under\/ $T$; or:
\item All zeros of\/ $g$ lie in\/
  $\bigl[ \frac{1}{4}, \frac{3}{4}\bigr]$, sparing at least one of the
  boundary points.
\end{enumerate}
Then, the probability measures on\/ $\TT$ defined by the
densities\/ $g^{}_n (x) \defeq 2^n\prod_{k=0}^{n-1} g(2^k x)$, as\/
$n\to\infty$, converge weakly to a probability measure on\/ $\TT$,
denoted by\/ $\mu_g$, and $\mu_g$ is strongly mixing on $(\TT,T)$.
\end{theorem}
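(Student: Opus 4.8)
The plan is to recast everything through the (weighted) transfer operator. For $f \in \cC(\TT)$, set
\[
 (L_g f)(x) \defeq g\bigl(\tfrac{x}{2}\bigr) f\bigl(\tfrac{x}{2}\bigr) + g\bigl(\tfrac{x+1}{2}\bigr) f\bigl(\tfrac{x+1}{2}\bigr),
\]
so that the $g$-function identity gives $L_g \mathbf{1} = \mathbf{1}$, while a short computation with the substitution $u = x/2$ (and $1$-periodicity) shows that the dual acts on densities by $L_g^* (\phi\ts\leb) = \bigl(2\ts g \cdot (\phi\circ T)\bigr)\ts\leb$. Iterating from $\phi \equiv 1$ identifies the Riesz densities as $g^{}_n\ts\leb = L_g^{*\ts n}\ts\leb$, whence
\[
 \int_\TT f \dd\nu^{}_n = \int_\TT (L_g^{\ts n} f) \dd\leb, \qquad \nu^{}_n \defeq g^{}_n\ts\leb .
\]
Thus weak convergence of $\nu^{}_n$ is equivalent to convergence of $\int_\TT L_g^{\ts n} f \dd\leb$ for every $f \in \cC(\TT)$, and I would aim for the stronger statement that $L_g^{\ts n} f$ converges \emph{uniformly} to a constant.

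First, a Krylov--Bogolyubov argument applied to the Ces\`aro means of $\nu^{}_n$ produces at least one $L_g^*$-invariant probability measure $\mu_g$ (a $g$-measure); note that $L_g \mathbf{1} = \mathbf{1}$ forces any such $\mu_g$ to be $T$-invariant. Next, using summable variation I would establish the central equicontinuity estimate: the family $\{L_g^{\ts n} f : n \in \NN\}$ is equicontinuous. Writing $L_g^{\ts n} f$ as a convex combination $\sum_j w^{(n)}_j(x)\ts f(y_j(x))$ over the $2^n$ branches of $T^{-n}$, with weights $w^{(n)}_j = \prod_{k=0}^{n-1} g(T^k\ts\cdot\,)$ summing to one, two nearby points $x,y$ have branch-wise close preimages, and summability of the variation of $g$ bounds $\sum_j |w^{(n)}_j(x) - w^{(n)}_j(y)|$ uniformly in $n$; crucially, one works with \emph{differences} of weights rather than ratios, so the zeros of $g$ cause no harm here. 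By Arzel\`a--Ascoli, $\{L_g^{\ts n} f\}$ is then precompact in $\cC(\TT)$, and any uniform limit point $h$ satisfies $L_g h = h$.

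The decisive step is to show that the only continuous solutions of $L_g h = h$ are constants; this is where conditions (1)--(3) enter. If $h$ attains its maximum at $x_0$, then $h(x_0)$ is a convex combination of the values of $h$ at the two $T$-preimages of $x_0$, so $h$ attains its maximum at every preimage carrying positive weight. Hence the maximum set $M = \{h = \max h\}$ is closed and has the property that each of its points possesses at least one $T$-preimage in $M$ (the branch on which $g$ does not vanish), producing backward $T$-orbits inside $M$. I would then argue that the hypotheses on the zeros force $M = \TT$: for (1) a single zero cannot block every branch of a spreading backward orbit; for (2) finitely many zeros, none entering a periodic orbit, likewise cannot trap $M$ in a proper closed set; and for (3) the dyadic geometry of $[\tfrac14,\tfrac34]$ together with a spared boundary point guarantees that backward iteration escapes the zero set, so $M$ is dense, hence all of $\TT$ by continuity. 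With harmonic functions constant, every uniform limit point of $L_g^{\ts n} f$ equals $\int_\TT f \dd\mu_g$ (evaluate $\int_\TT L_g^{\ts n} f \dd\mu_g = \int_\TT f \dd\mu_g$ and pass to the limit), so the full sequence converges uniformly; integrating against $\leb$ yields $\nu^{}_n \to \mu_g$ weakly, and the same argument applied to any $g$-measure shows $\mu_g$ is the unique one.

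Finally, strong mixing is essentially free from the uniform convergence. Since $\mu_g$ is a $g$-measure, the pull-out identity $L_g\bigl(f\cdot (h\circ T)\bigr) = h \cdot L_g f$ shows that $L_g$ is the adjoint in $L^2(\mu_g)$ of the Koopman operator $U\!: h \mapsto h\circ T$. Therefore, for $f, h \in \cC(\TT)$,
\[
 \int_\TT (h\circ T^n)\, f \dd\mu_g = \int_\TT h \,\bigl(L_g^{\ts n} f\bigr) \dd\mu_g \;\xrightarrow[n\to\infty]{}\; \int_\TT f \dd\mu_g \int_\TT h \dd\mu_g ,
\]
and density of $\cC(\TT)$ in $L^2(\mu_g)$ upgrades this to all pairs, which is strong mixing. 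The main obstacle is clearly the harmonic-function rigidity of the third step, where each of the three hypotheses on the zero set must be converted into a concrete statement preventing the maximum set from being trapped in a proper $T$-backward-invariant subset.
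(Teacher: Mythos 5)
Your architecture --- transfer operator, the identification $g^{}_n\ts\leb = \bigl(\varphi_g^n\bigr){}_{*}\leb$, equicontinuity from summable variation plus Arzel\`a--Ascoli, a rigidity step where hypotheses (1)--(3) enter, and strong mixing via the pull-out identity $\int (h\circ T^n)\ts f \dd\mu_g = \int h\,\varphi_g^n f \dd\mu_g$ --- is exactly the architecture of the result the paper relies on. Note, however, that the paper does not reprove that core: it isolates it as Proposition~\ref{prop:bounded-distortion}, cites \cite{Keane} (remarking only that summable variation replaces differentiability in the equicontinuity step), and then obtains Theorem~\ref{thm:mug} from Lemma~\ref{lem:g_k} and dominated convergence. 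Your equicontinuity estimate (working with differences of branch weights, so that zeros of $g$ are harmless) and your mixing argument are both sound, given uniform convergence. The problem is that you attempt to supply the cited core, and there the argument has two genuine gaps.

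First, the claim that any uniform limit point $h$ of $\{\varphi_g^n f\}$ satisfies $\varphi_g h = h$ is unjustified: the set of limit points is invariant under $\varphi_g$, but an individual limit point need not be fixed --- a priori the sequence could move along a cycle or continuum of non-fixed limit functions. The standard repair uses that $\varphi_g$ is an averaging operator, so $\max \varphi_g^n f$ is non-increasing and $\min \varphi_g^n f$ is non-decreasing; consequently every limit point $h$ satisfies $\max \varphi_g^m h = \max h$ and $\min \varphi_g^m h = \min h$ for \emph{all} $m$, and the maximum principle must then be run on the sets $M_m = \{\varphi_g^m h = \max h\}$, whose positive-weight preimages land in $M_{m-1}$, not in $M_m$. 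Second, and decisively, your sketch for hypothesis (1) is false as stated: if the single zero of $g$ sits at $\frac{1}{2}$, then $g(0)=1$, the unique positive-weight preimage of $0$ is $0$ itself, and the maximum set can perfectly well be trapped at the singleton $\{0\}$ --- nothing ``spreads''. This is precisely the pure point case $\mu_g=\delta^{}_0$, which the theorem covers. To conclude constancy there one must play the maximum set against the minimum set (both would be forced to equal $\{0\}$, contradicting their disjointness for non-constant $h$); a short combinatorial argument shows this configuration is the only possible trap under (1), but it must be made. Hypotheses (2) and (3) likewise require real arguments converting the location of the zeros into density of positive-weight preimage trees; that is the actual content of Keane's theorem, and flagging it as ``the main obstacle'' does not discharge it.
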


Below, we call a $g$-function \emph{good} when one of the assumptions
(and thus the conclusion) of Theorem~\ref{thm:mug} holds.  Since both
$\mu_g$ and Lebesgue measure $\leb$ are ergodic, invariant probability
measures for $(\TT,T)$, they are either equal or mutually singular.
In fact, by standard properties of $g$-measures, we can fully
characterise the spectral type of $\mu_g$ for good $g$-functions as
follows.

\begin{theorem}\label{thm:sing}
  Let\/ $g\in\cC (\TT)$ be a good\/ $g$-function, and\/ $\mu_g$ the
  associated measure. Then, we are in one of the following three
  cases.
\begin{enumerate}\itemsep=2pt
\item[(\textsf{ac}):] $\mu_g = \leb$ if and only if\/ $g$ is constant
  on\/ $\TT$, which means $g \equiv \frac{1}{2}$.
\item[(\textsf{pp}):] $\mu_g = \delta^{}_{0}$ if and only if\/
  $g \bigl(\frac{1}{2}\bigr) = 0$.
\item[(\textsf{sc}):] $\mu_g$ is singular continuous with respect to\/
  $\leb$ otherwise.
\end{enumerate}  
\end{theorem}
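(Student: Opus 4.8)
The plan is to pin down the Lebesgue type of $\mu_g$ by combining two purity dichotomies, and then to read off the three cases from the defining relation of a $g$-measure. First, since $\mu_g$ and $\leb$ are both ergodic $T$-invariant probability measures, the dichotomy stated just before the theorem gives either $\mu_g = \leb$ or $\mu_g \perp \leb$; in the second situation $\mu_g$ carries no absolutely continuous part. This already separates the absolutely continuous case from the singular one, so the remaining task is to decide, within the singular regime, whether a pure point part is present, and then to verify the stated conditions on $g$.

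To control atoms I would exploit that $\mu_g$ is strongly mixing. For a point $p\in\TT$ with $a \defeq \mu_g(\{p\})$, testing strong mixing on $A=B=\{p\}$ yields $\mu_g(\{p\}\cap T^{-n}\{p\})$, which equals $a$ when $T^n p = p$ and $0$ otherwise; strong mixing forces this to converge to $a^2$. If $p$ is not periodic the quantity vanishes for all $n\geqslant 1$, so $a=0$; if $p$ is periodic of minimal period $d$, the sequence equals $a$ on multiples of $d$ and $0$ otherwise, which can converge to $a^2$ only when $a=0$ or $d=1$, the latter giving $a=a^2$. As $0$ is the unique fixed point of the doubling map on $\TT$, the only possible atom sits at $0$ and has mass $0$ or $1$. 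Hence $\mu_g$ is either the point mass $\delta^{}_0$ or purely continuous, which is the second dichotomy and guarantees the desired purity of the pure point part.

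It remains to match $\leb$ and $\delta^{}_0$ to the conditions on $g$ through the $g$-measure relation $\int f \dd\mu_g = \int (L_g f)\dd\mu_g$ for all $f\in\cC(\TT)$, where $(L_g f)(x) = g\bigl(\tfrac{x}{2}\bigr) f\bigl(\tfrac{x}{2}\bigr) + g\bigl(\tfrac{x+1}{2}\bigr) f\bigl(\tfrac{x+1}{2}\bigr)$. For (\textsf{ac}), if $g\equiv\tfrac12$ the densities satisfy $g^{}_n \equiv 1$, so $\mu_g=\leb$; conversely, $\mu_g=\leb$ makes $\leb$ a $g$-measure, and a change of variables turns the relation into $\int f \dd\leb = 2\int g f \dd\leb$ for all $f$, whence $g\equiv\tfrac12$ by continuity. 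For (\textsf{pp}), the computation $(L_g f)(0) = g(0) f(0) + g\bigl(\tfrac12\bigr) f\bigl(\tfrac12\bigr)$ together with $g(0)=1-g\bigl(\tfrac12\bigr)$ shows that $\delta^{}_0$ satisfies the relation exactly when $g\bigl(\tfrac12\bigr)=0$; since a good $g$-function admits a unique $g$-measure, this gives $\mu_g=\delta^{}_0 \Leftrightarrow g\bigl(\tfrac12\bigr)=0$. Finally, if neither $g\equiv\tfrac12$ nor $g\bigl(\tfrac12\bigr)=0$ holds, then $\mu_g\neq\leb$ and $\mu_g\neq\delta^{}_0$, so the two dichotomies force $\mu_g$ to be both singular with respect to $\leb$ and continuous, which is case (\textsf{sc}).

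The main obstacle I anticipate is the purity statement rather than the identification of the conditions: one has to exclude mixed types such as a convex combination of $\delta^{}_0$ with a continuous measure, or of an absolutely continuous with a singular part. The ergodic dichotomy disposes of the latter, while the strong-mixing test above is the crucial ingredient for the former, as it rules out any atom of intermediate mass as well as any atom away from the fixed point $0$. A secondary point requiring care is the legitimacy of the $g$-measure relation and the uniqueness used for (\textsf{pp}); both belong to the $g$-measure framework underlying Theorem~\ref{thm:mug}, and I would invoke them as established in Section~\ref{sec:mug}.
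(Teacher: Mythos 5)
Your proof is correct, but it takes a genuinely different route from the paper's at the crucial point, namely the purity of the atomic part. The paper first proves (Proposition~\ref{prop:useful}) that the pure point part of $\mu_g$ is itself invariant under $(\varphi_g)_{*}$, so that uniqueness of the $g$-measure forces $\mu_{g,\mathsf{pp}} \in \{0, \mu_g\}$; it then shows (Proposition~\ref{prop:atom-orbits}) that any atom must sit on a periodic $T$-orbit along which $g \equiv 1$, and finally runs a case analysis over the three goodness conditions of Theorem~\ref{thm:mug} to conclude that the only admissible atom is at $x=0$, with the converse handled via the pointwise convergence $(\varphi_g^k f)(0) \to \mu_g(f)$. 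You instead exploit the strong mixing of $\mu_g$ (which is part of the conclusion of Theorem~\ref{thm:mug}, hence available): testing mixing on $A=B=\{p\}$ immediately kills atoms at non-periodic points, rules out atoms on periodic orbits of period $d \geqslant 2$ (the correlation sequence oscillates between $a$ and $0$, so it cannot converge to $a^2$ unless $a=0$), and at the unique fixed point $0$ forces $a = a^2 \in \{0,1\}$; this single computation replaces both the transfer-operator purity argument and the entire case analysis. Your identification of the \textsf{pp} and \textsf{ac} conditions via uniqueness --- checking directly that $\delta_0$ (respectively $\leb$) satisfies the $g$-measure relation precisely when $g\bigl(\tfrac12\bigr)=0$ (respectively $g \equiv \tfrac12$) --- is also a clean alternative to the paper's arguments and is fully justified by Proposition~\ref{prop:bounded-distortion}. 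What your approach buys is brevity and uniformity: no case distinction over the goodness hypotheses is needed, because that work is effectively absorbed into Keane's strong-mixing result. What the paper's approach buys is more information with weaker input: Proposition~\ref{prop:atom-orbits} holds for arbitrary $g$-measures (not just good ones) and pinpoints that atoms force $g=1$ along the orbit, which the paper then reuses to discuss non-uniqueness examples; moreover, its purity argument only needs uniqueness, not mixing.
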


As we are aiming at generalisations of the TM measure, we will impose
another property. We say that a good $g$-function $g$ exhibits
\emph{power-law scaling} if there exist positive constants $c^{}_1$,
$c^{}_2$, $\theta^{}_1$ and $\theta^{}_2$ such that
\[
    c^{}_1 \ts x^{\theta_1} \, \leqslant \, g(x)
    \, \leqslant \, c^{}_2 \ts x^{\theta_2}
\]
holds for all $x \in \bigl[ 0,\frac{1}{2} \bigr]$. This in particular
implies that $g(0) =0$ is the unique zero of $g$ in the interval
$\bigl[ 0,\frac{1}{2} \bigr]$.

\begin{example}\label{ex:guide}
  Guiding examples of such $g$-functions are given by
  $g^{}_t (x)= \frac{1}{2} \bigl(1-\cos(2\pi x) \bigr)$, which induces
  the TM measure, by the tent map
\[
   g^{}_{\wedge} (x) \, = \, \begin{cases}
    2\ts x , & 0 \leqslant x \leqslant \frac{1}{2} \ts , \\
    2 \ts (1-x), & \frac{1}{2} < x < 1\ts , \end{cases}
\]
and by the square root inspired function 
\[
   g^{}_{\sqrt{\phantom{.}}} (x) \, = \, \begin{cases}
   \sqrt{x} \ts , & 0 \leqslant x \leqslant \frac{1}{4} \ts , \\
   1- \sqrt{\big| x-\frac{1}{2}\big| } \ts , & \frac{1}{4} < x
      \leqslant \frac{3}{4} \ts , \\
   \sqrt{1-x} \ts , & \frac{3}{4} < x < 1\ts . \end{cases}
\]
These three functions exemplify the three behaviours of $g$-functions
with power-law scaling at $0$. Indeed, at $x=0$, the derivative of
$g^{}_t$ vanishes, $g^{}_\wedge$ has finite derivative, and
$g^{}_{\sqrt{\phantom{.}}}$ has undefined (infinite) derivative; see
the top row of Figure~\ref{Fig:gs} for graphs of these functions.
\exend
\end{example}

\begin{figure}[ht]
\begin{center}
\begin{subfigure}{0.328\textwidth} 
  \includegraphics[width=\linewidth]{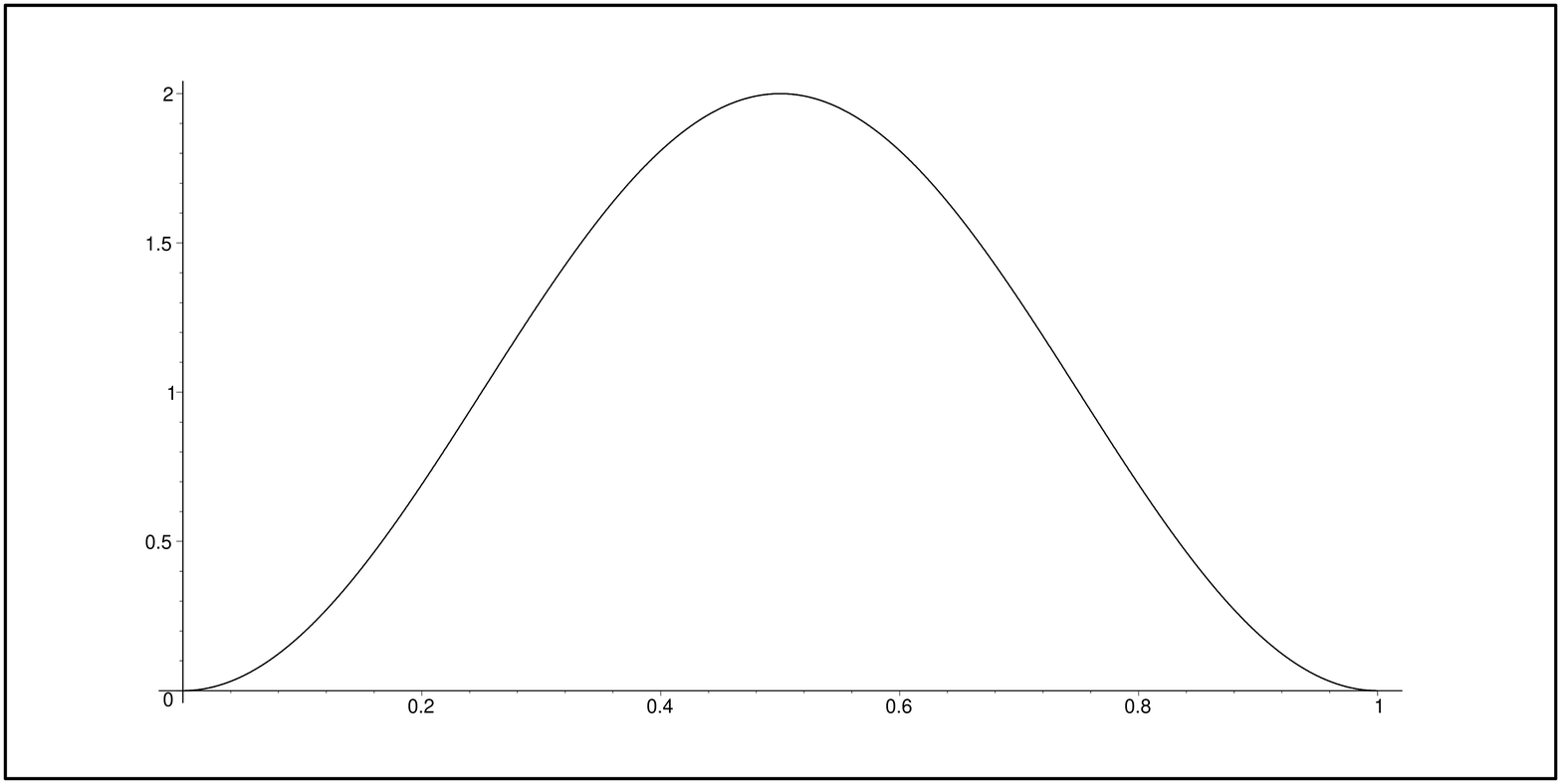} 
  
  \vspace{2pt}
  \includegraphics[width=\linewidth]{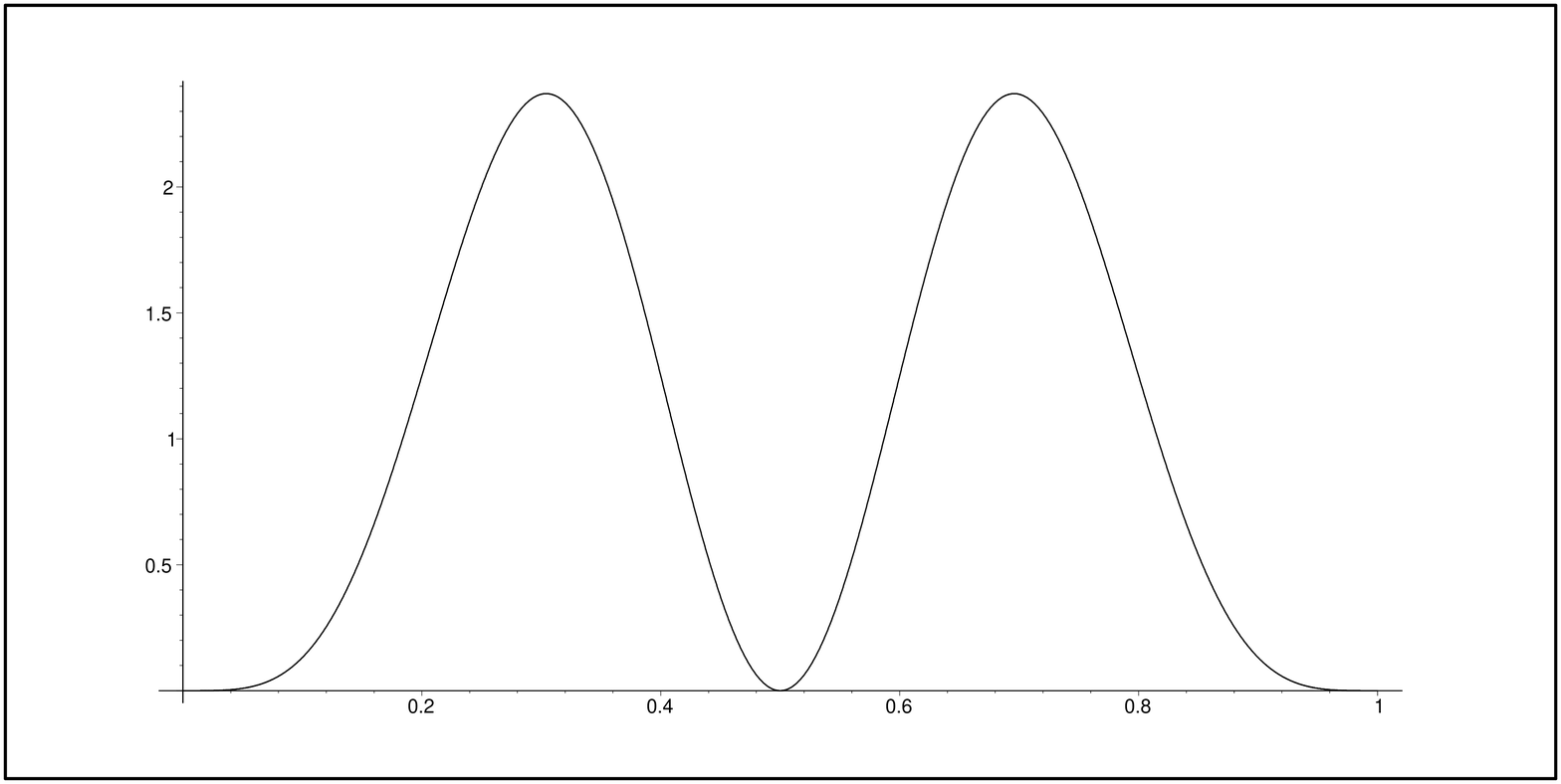} 
  
  \vspace{2pt}
  \includegraphics[width=\linewidth]{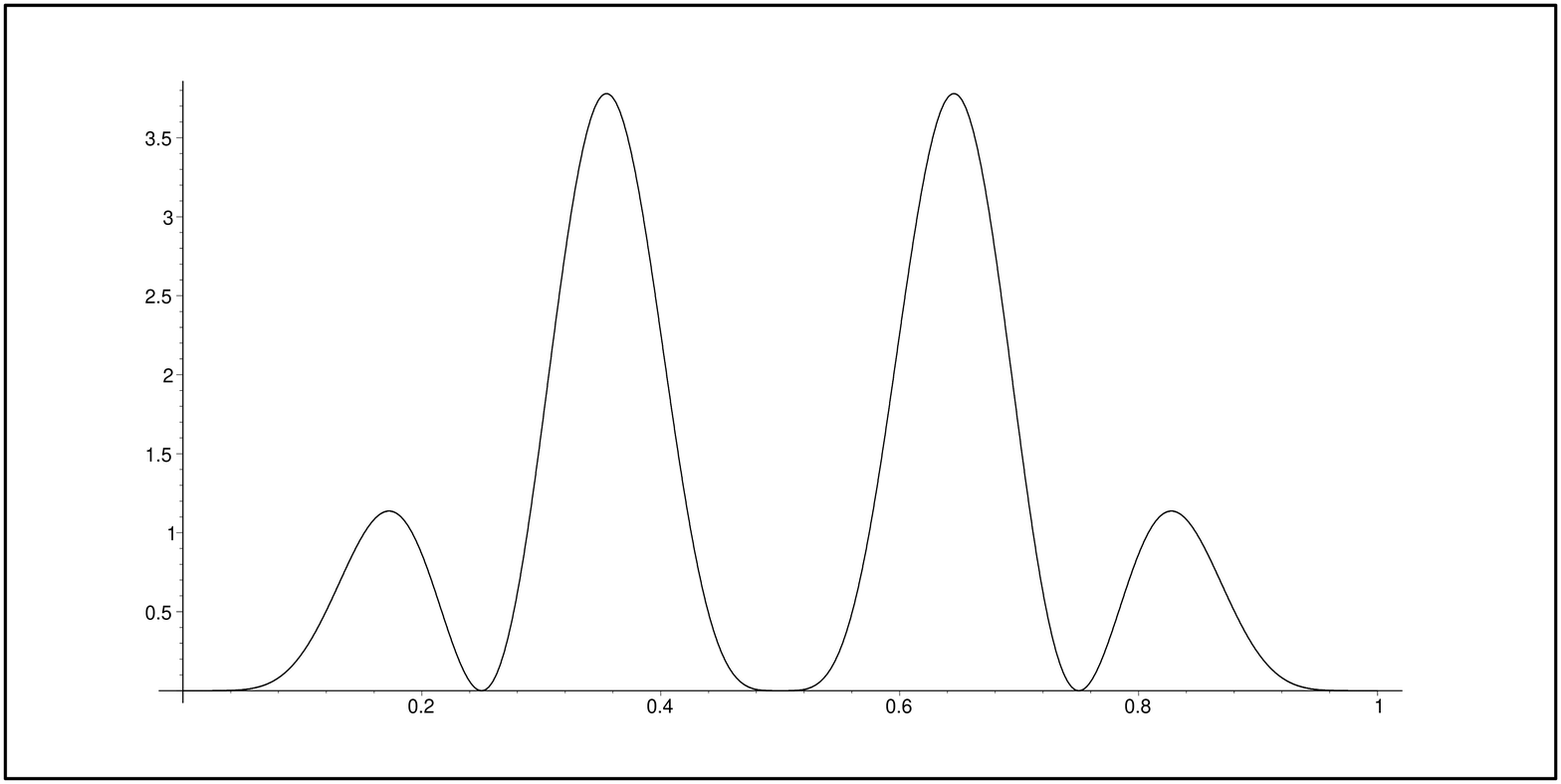} 
  
  \vspace{2pt}
  \includegraphics[width=\linewidth]{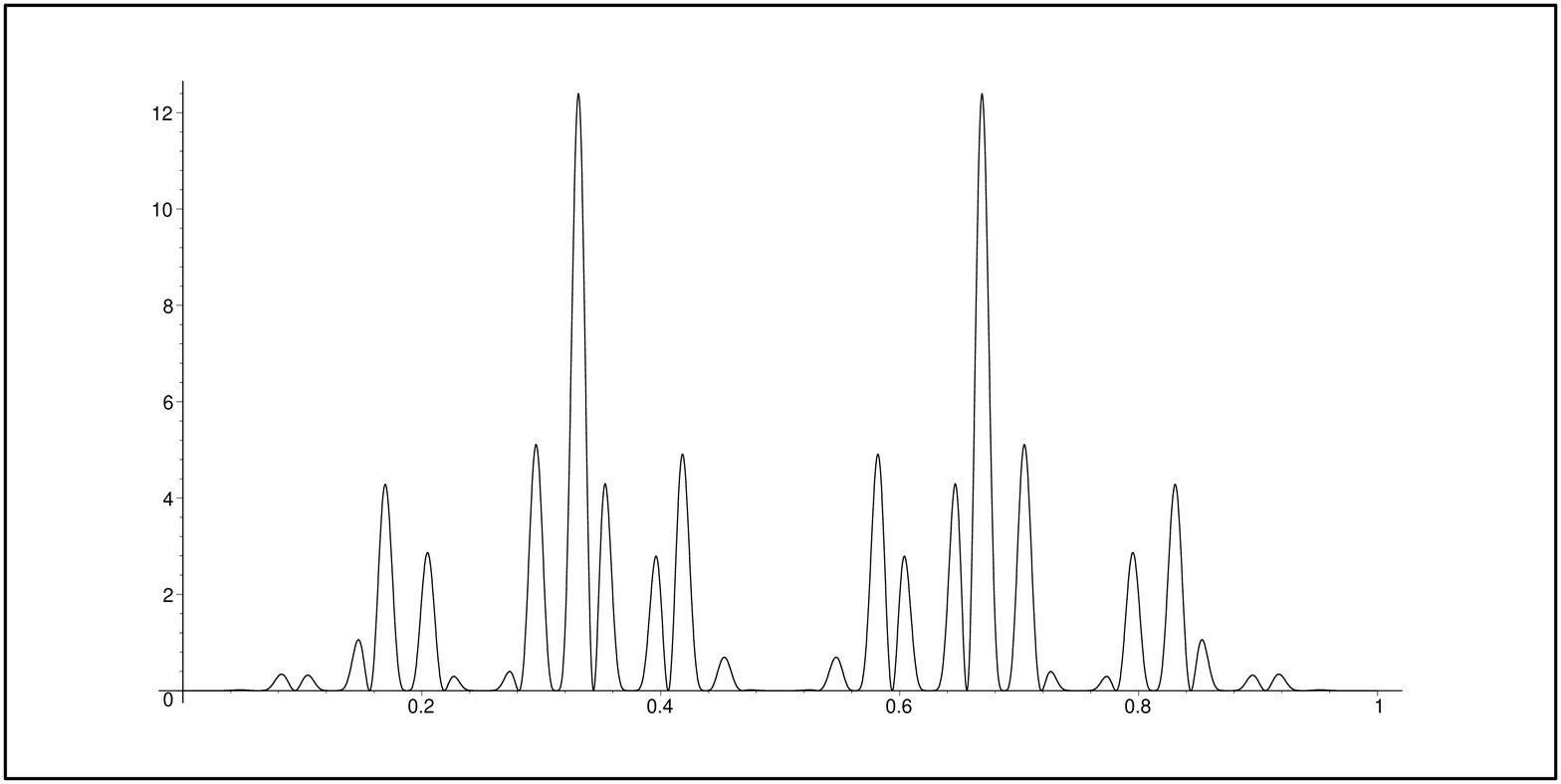}
  
  \vspace{2pt}
  \includegraphics[width=\linewidth]{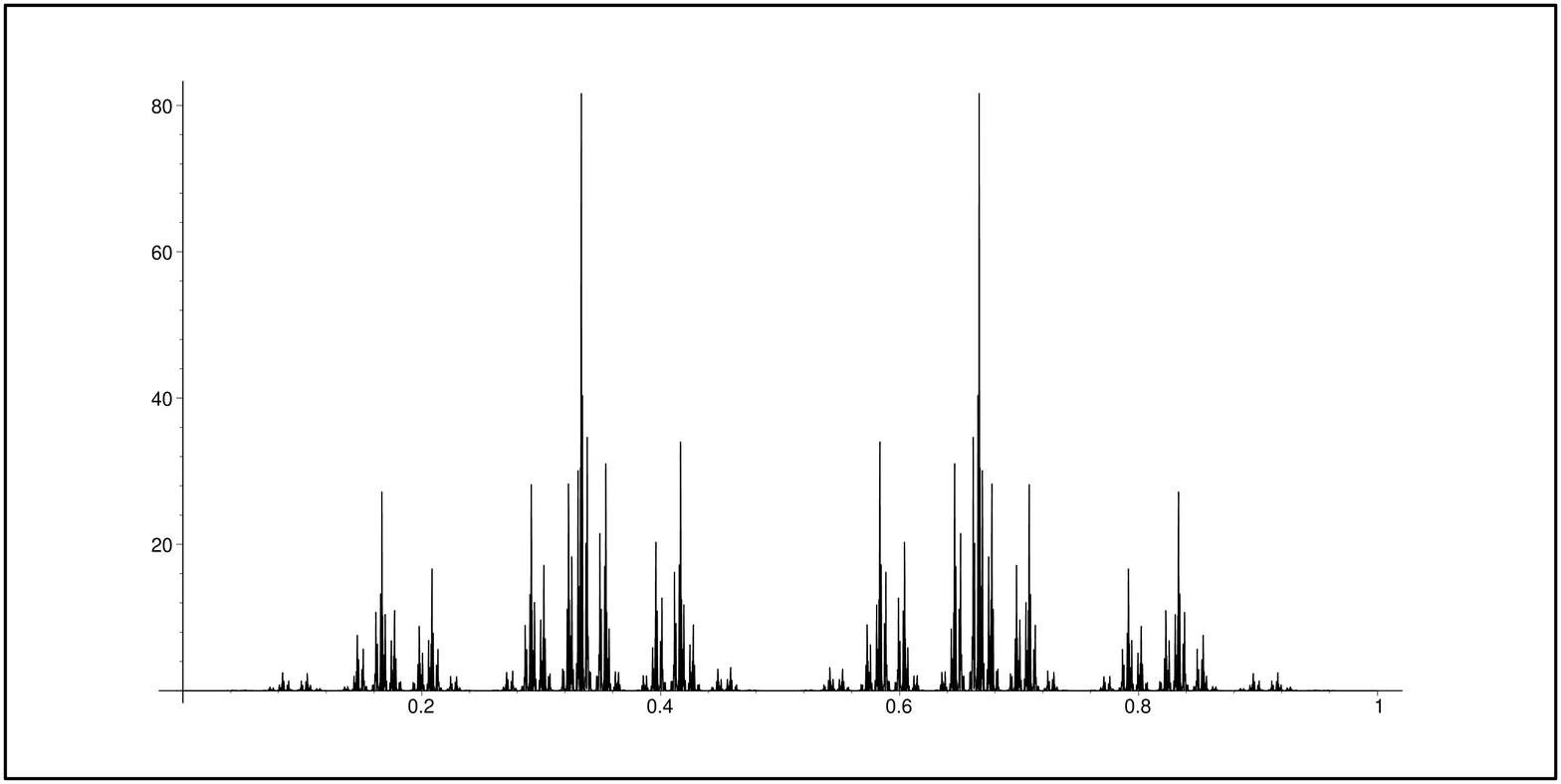}
 
  \vspace{2pt}
  \caption{the function $g^{}_t$}
\end{subfigure}
\begin{subfigure}{0.328\textwidth} 
  \includegraphics[width=\linewidth]{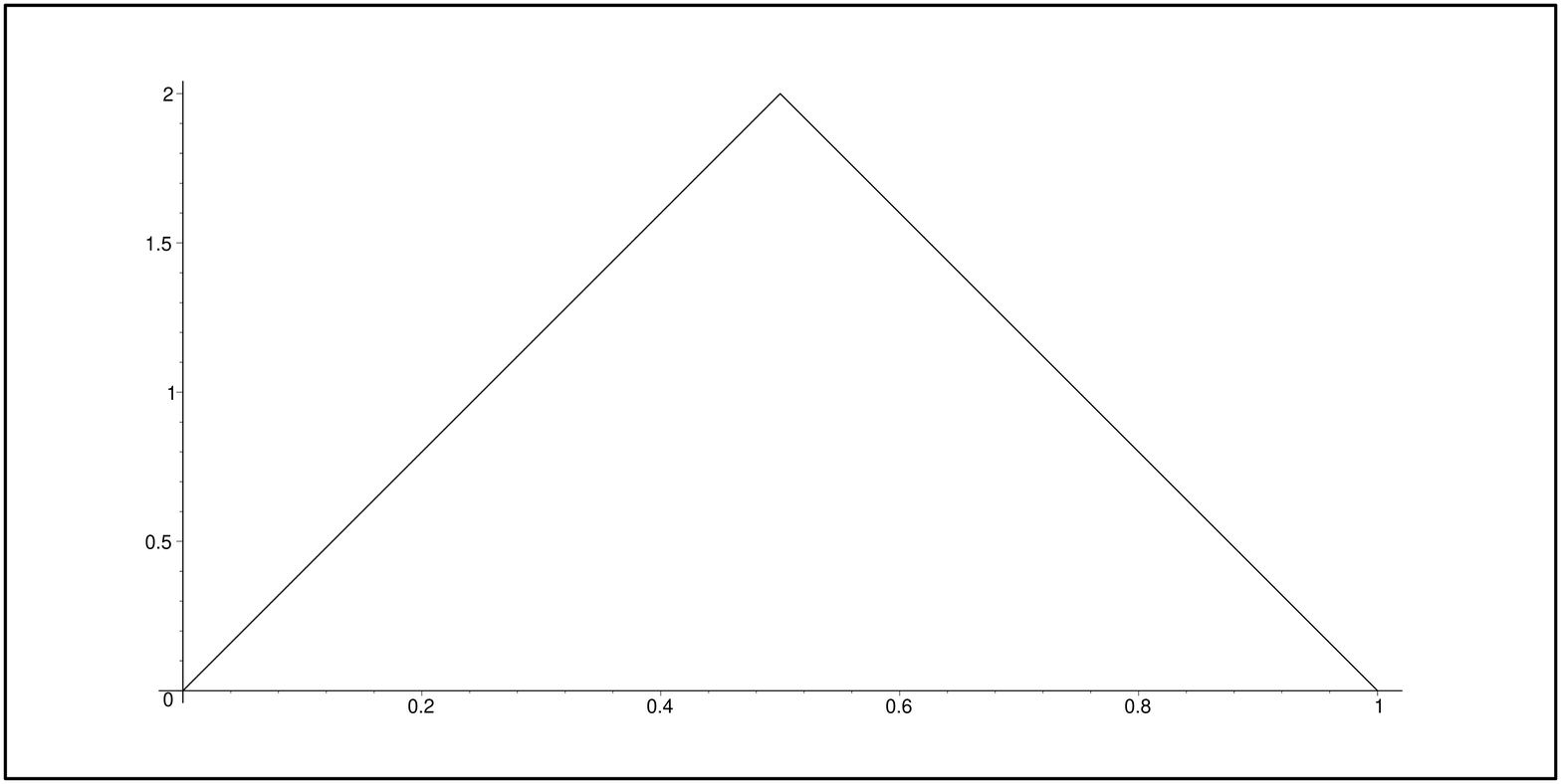} 
  
  \vspace{2pt}
  \includegraphics[width=\linewidth]{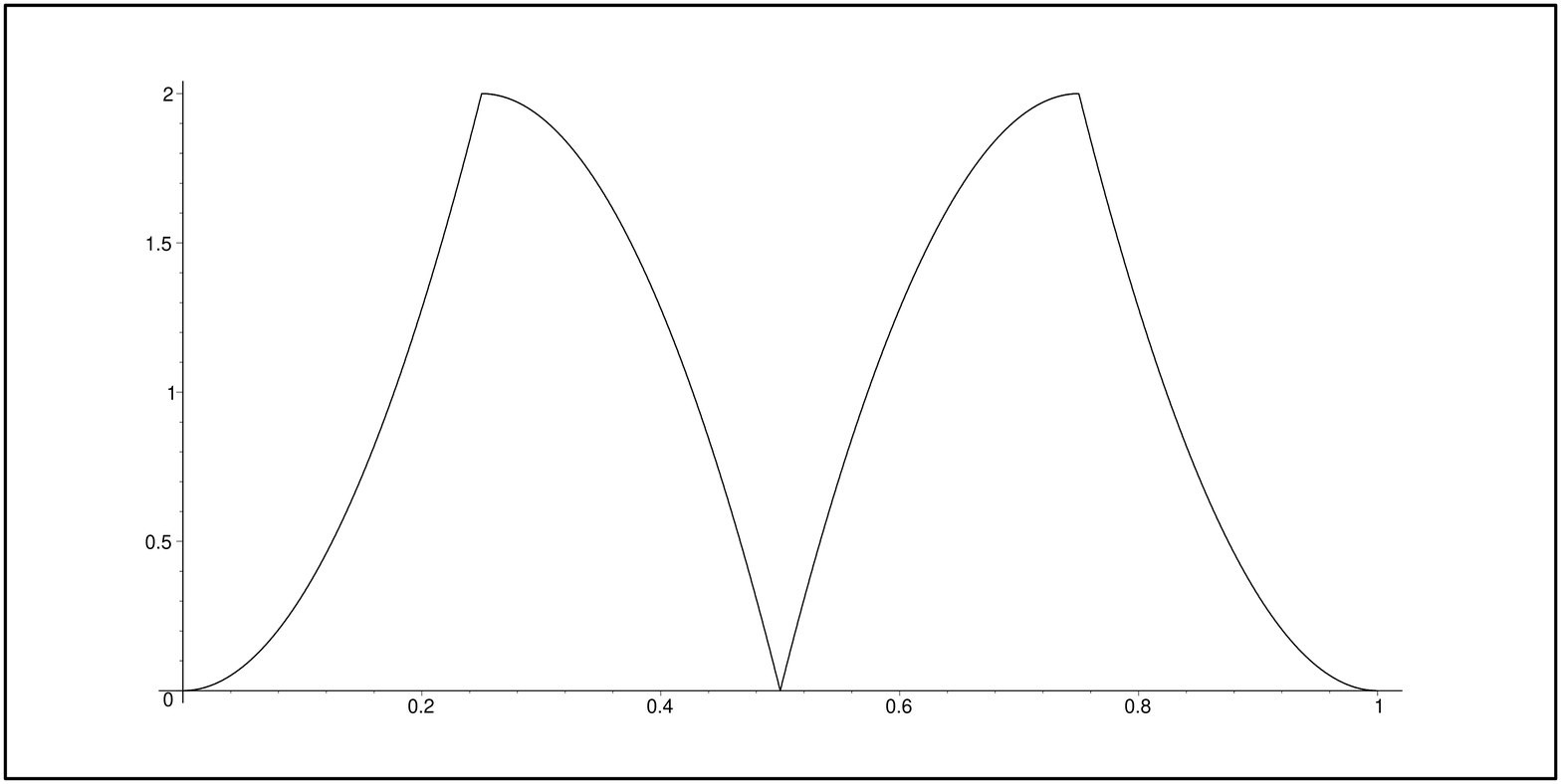} 
  
  \vspace{2pt}
  \includegraphics[width=\linewidth]{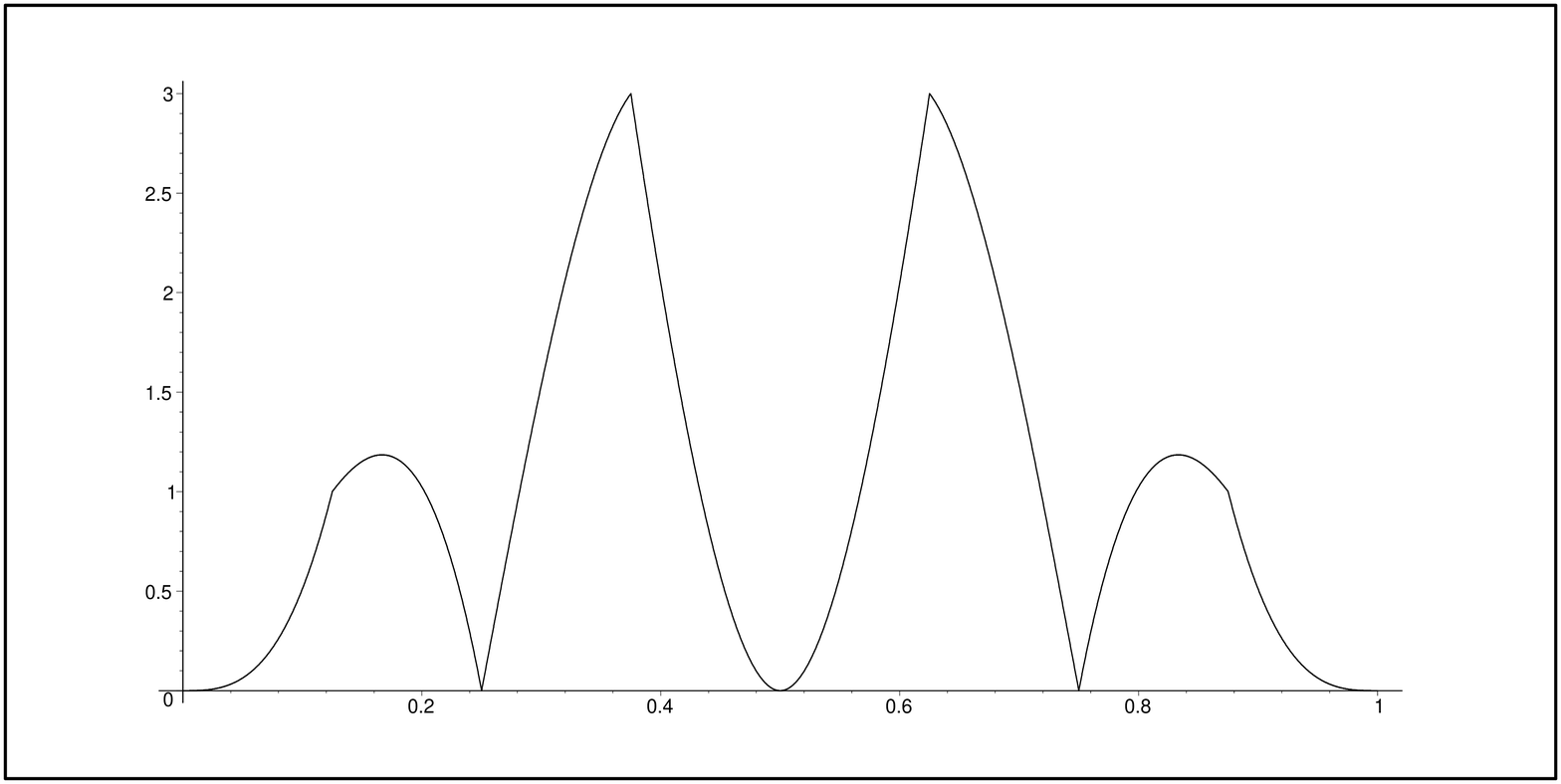} 
  
  \vspace{2pt}
  \includegraphics[width=\linewidth]{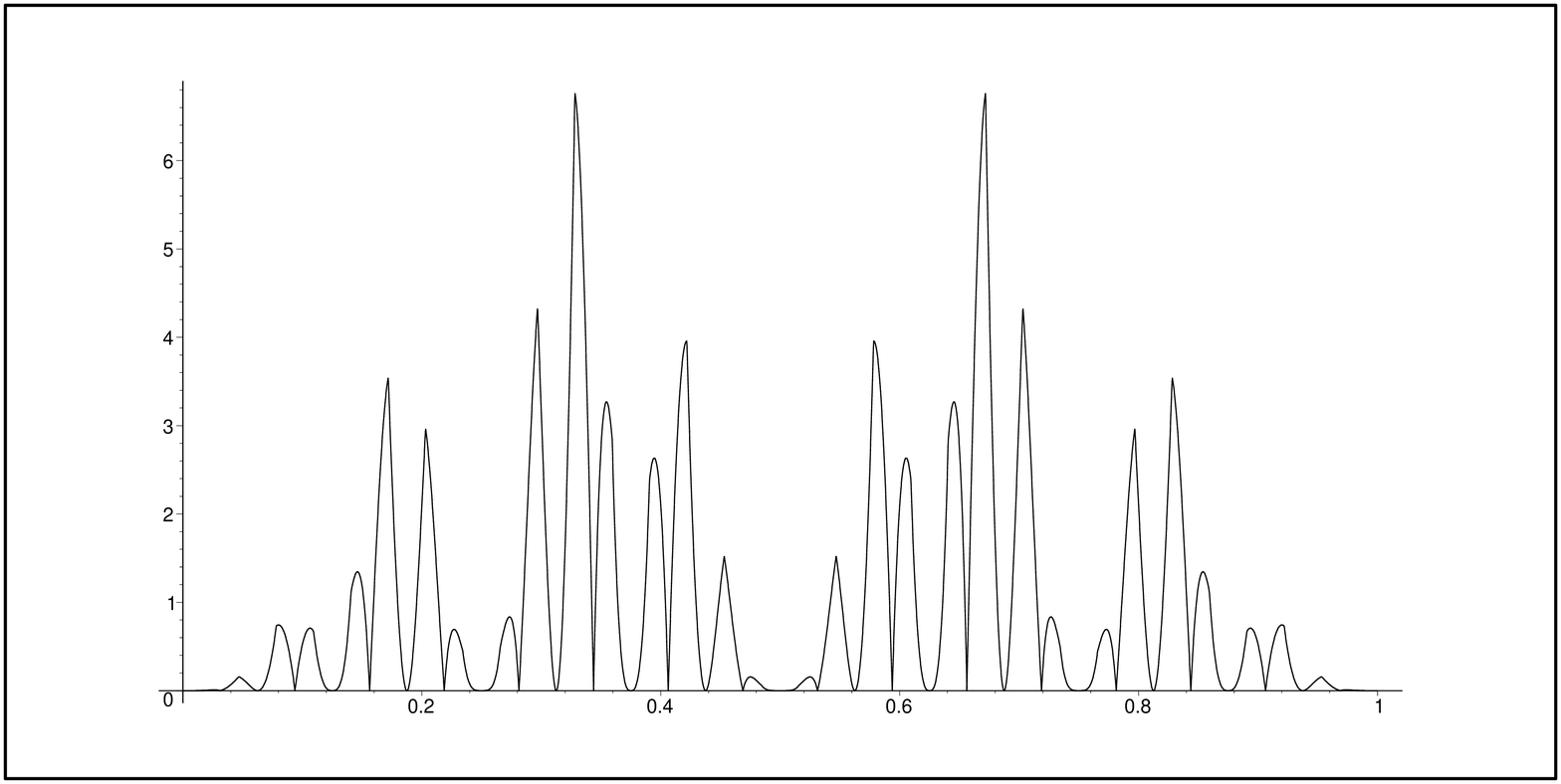} 
  
  \vspace{2pt}
  \includegraphics[width=\linewidth]{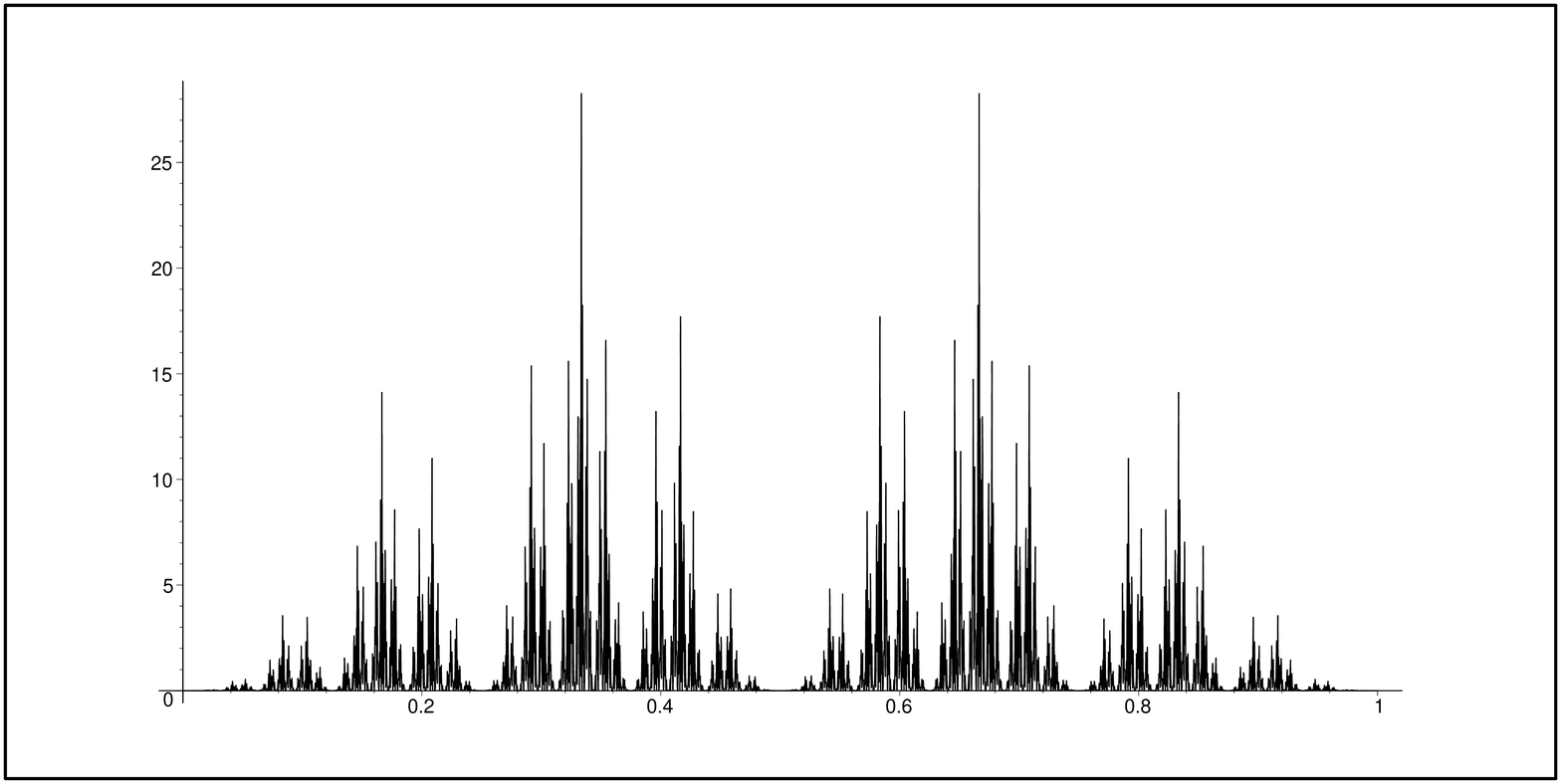}  
 
  \vspace{2pt}
  \caption{the function $g^{}_{\wedge}$}
\end{subfigure}
\begin{subfigure}{0.328\textwidth} 
  \vspace{1pt}
  \includegraphics[width=\linewidth]{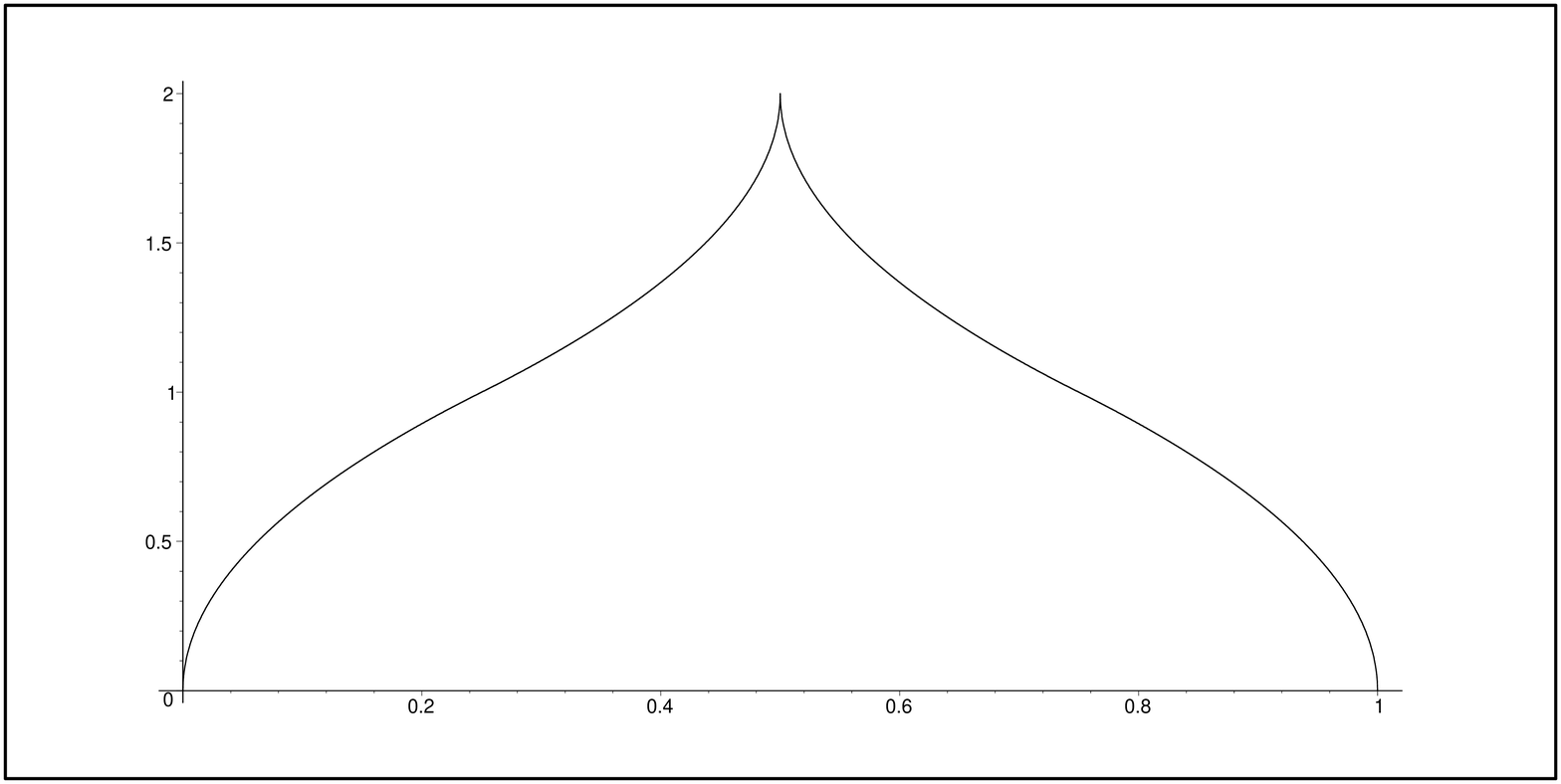} 
  
  \vspace{2pt}
  \includegraphics[width=\linewidth]{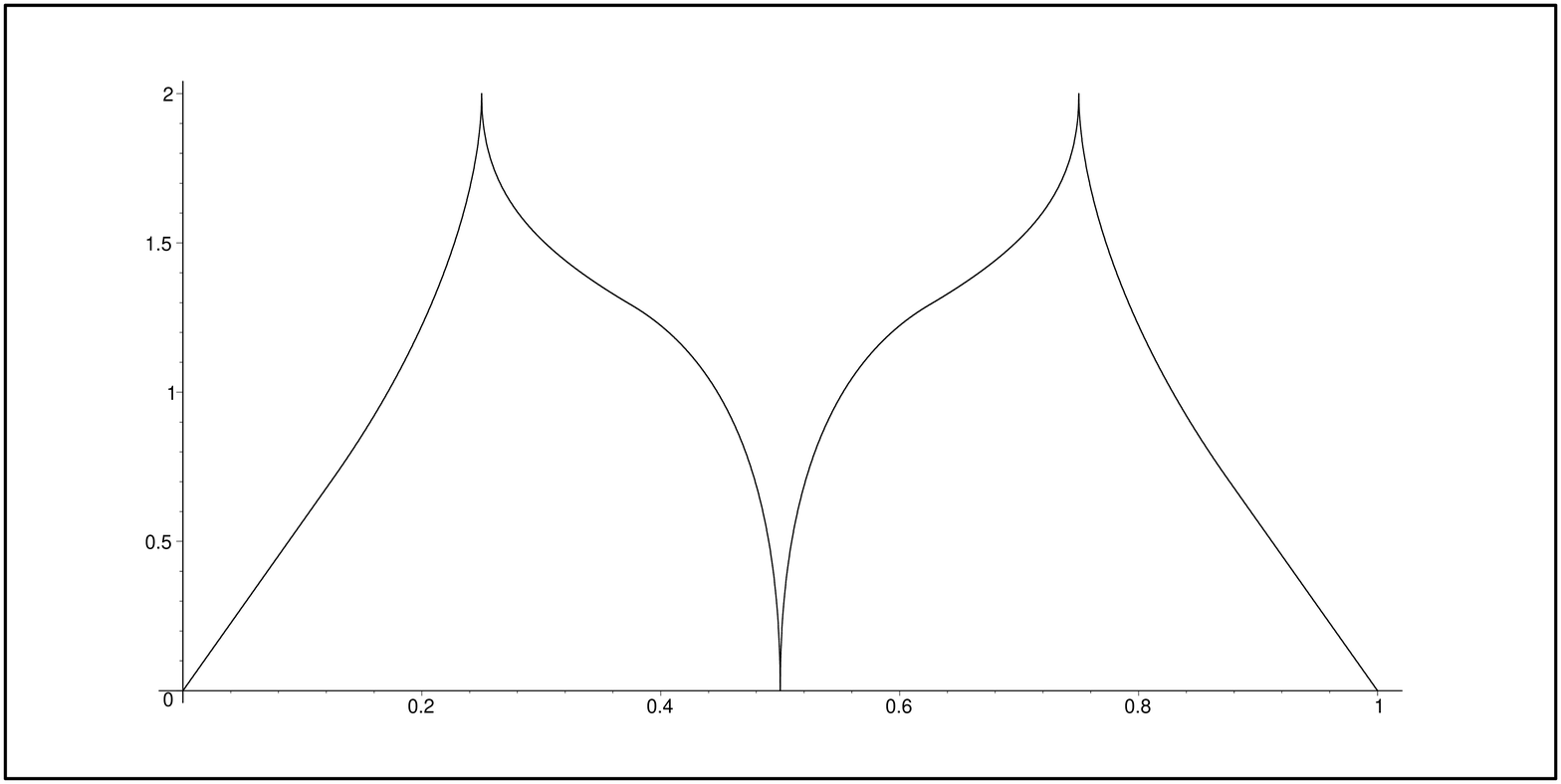}  
  
  \vspace{2pt}
  \includegraphics[width=\linewidth]{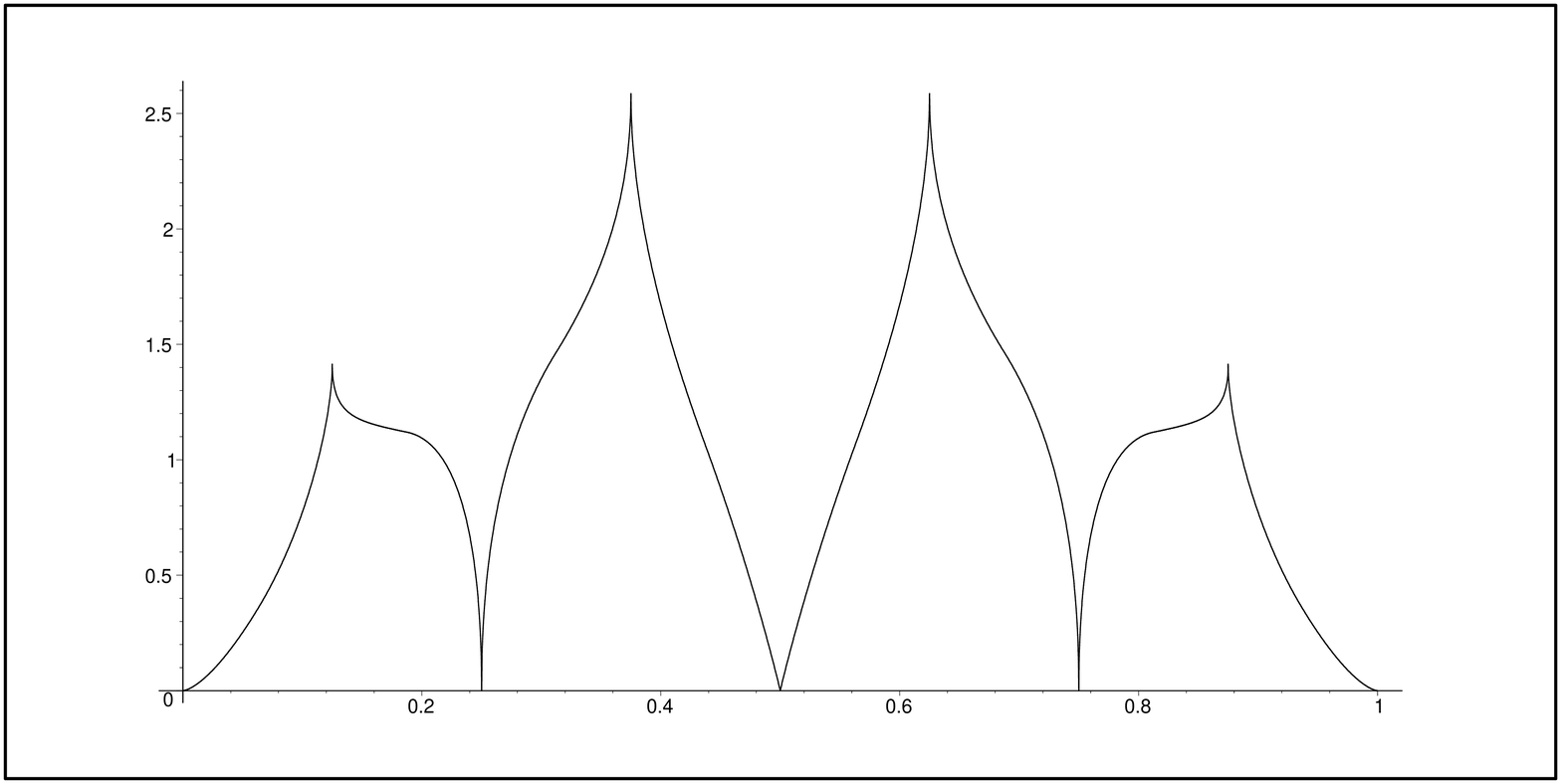}  
  
  \vspace{2pt}
  \includegraphics[width=\linewidth]{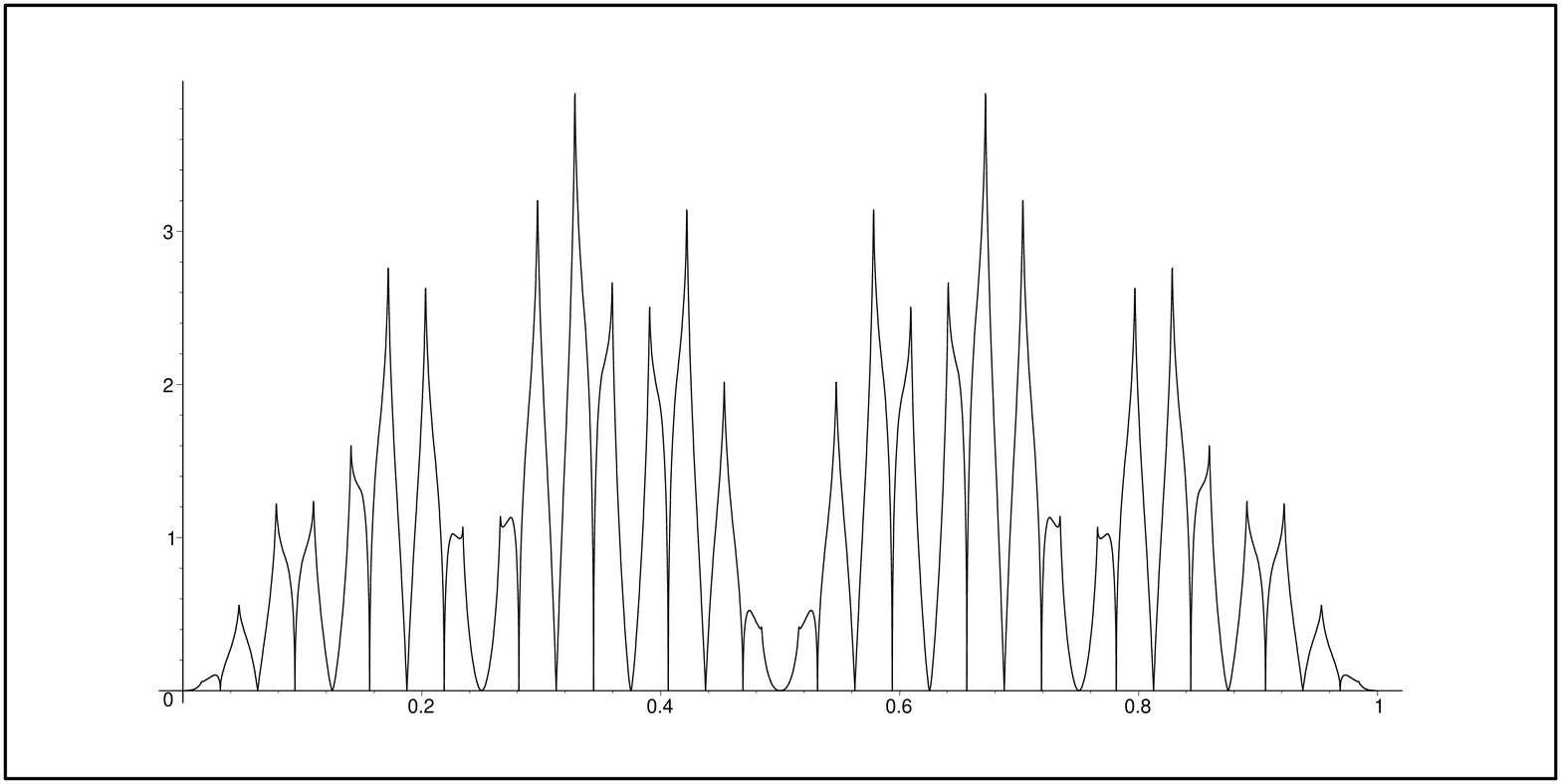}  
  
  \vspace{2pt}
  \includegraphics[width=\linewidth]{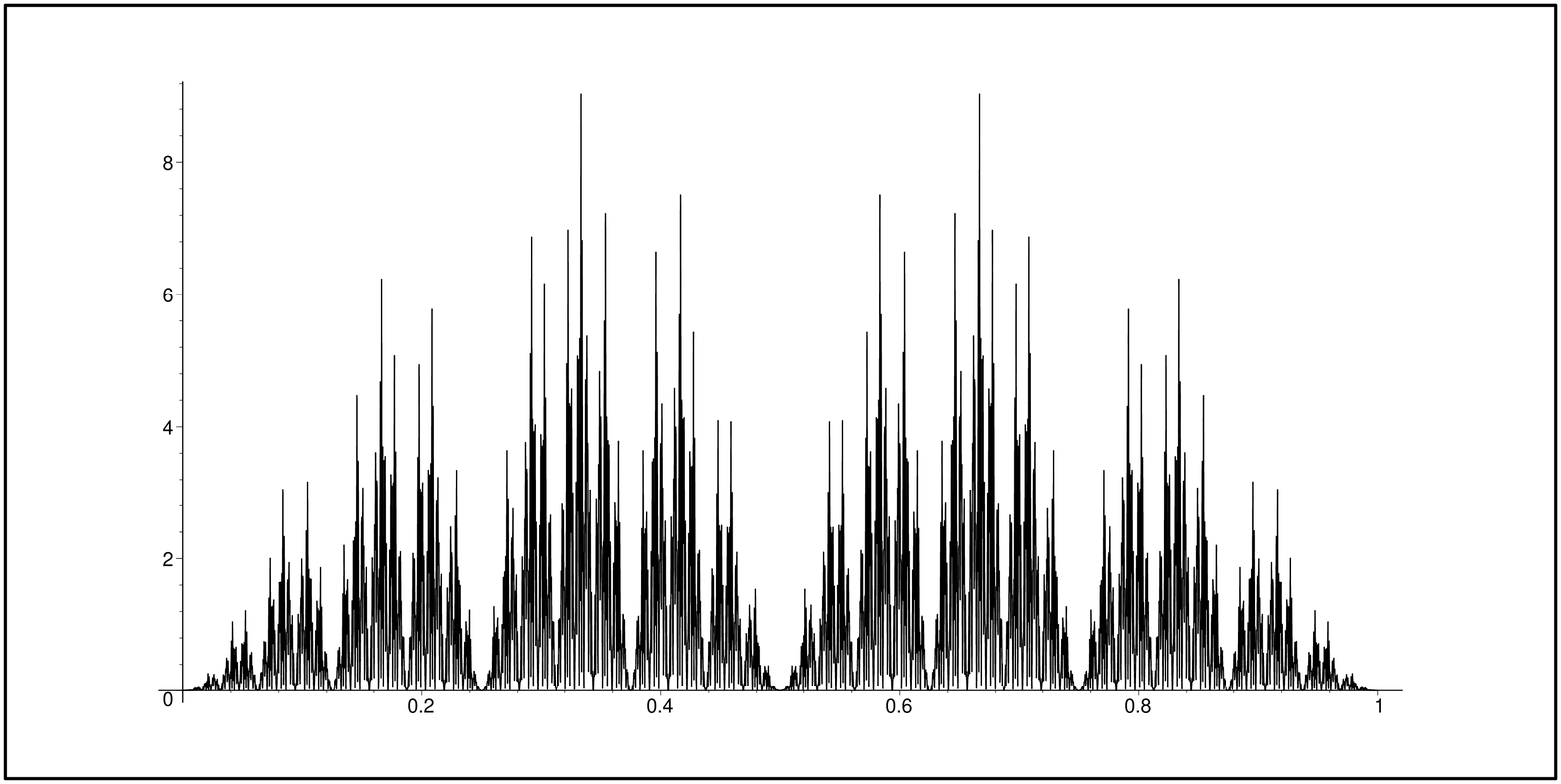}  
 
  \vspace{2pt} 
  \caption{the function $g^{}_{\sqrt{\phantom{.}}}$}
\end{subfigure}
\end{center}
\caption{The probability densities $g^{}_1$, $g^{}_2$,
  $g^{}_3$, $g^{}_6$ and $g^{}_{11}$, from the sequence
  $(g^{}_n)^{}_{n\in\NN}$ that converges to $\mu_g$, for 
  the three $g$-functions of Example~\ref{ex:guide}.}
\label{Fig:gs}
\end{figure}

Next, we examine the distribution function
$F_g (x)\defeq\mu_g([0,x])$. After proving that $F_g (x)$ is strictly
increasing with $x$, which is a generalisation of this known property
in the TM case, we show that the scaling of $F_g (x)$ near $0$ is
super-polynomial. This is our main result.

\begin{theorem}\label{thm:scaling}
  Let\/ $g\in\cC (\TT)$ be a good\/ $g$-function with power-law
  scaling.  Then, one has
\[
   \log \bigl( F_g (x)\bigr) \, \asymp \, -\log^{}_2 (x)^2 
   \quad \text{as } x \to 0^{+} \! ,
\]  
where\/ $\log^{}_2$ denotes the logarithm to base\/ $2$. In
particular, $F_g (x)$ decays faster than any power of\/ $x$ as\/
$x\to 0^+ \!$.
\end{theorem}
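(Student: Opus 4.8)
The plan is to reduce the estimate to the dyadic sequence $x = 2^{-n}$ and to exploit the self-similarity of $\mu_g$ under the doubling map. Substituting $u = 2^n x$ in the product $g_m (x) = 2^m \prod_{k=0}^{m-1} g(2^k x)$ and splitting the factors according to the sign of the exponent $k-n$ gives, for every $m \geqslant n$, the exact identity
\[
  \int_0^{2^{-n}} \! g_m (x) \dd x \; = \int_0^1 \Bigl( \prod_{j=1}^{n} g \bigl( 2^{-j} u \bigr) \Bigr) g_{m-n}(u) \dd u \ts .
\]
Letting $m \to \infty$, the right-hand side converges to $\int_{\TT} \prod_{j=1}^n g (2^{-j} u) \dd \mu_g (u)$ by the weak convergence $g_{m-n} \ts \leb \to \mu_g$ against this continuous integrand, while the left-hand side converges to $\mu_g \bigl( [0, 2^{-n}] \bigr) = F_g (2^{-n})$ because $\mu_g$ is atomless (we are in case~(\textsf{sc}) of Theorem~\ref{thm:sing}, so that $[0, 2^{-n}]$ is a $\mu_g$-continuity set). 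This yields the self-similar integral representation
\[
   F_g \bigl( 2^{-n} \bigr) \; = \int_{\TT} \prod_{j=1}^{n} g \bigl( 2^{-j} u \bigr) \dd \mu_g (u) \ts ,
\]
which is the central tool; note that $F_g (2^{-n}) > 0$ by the strict monotonicity of $F_g$ established above.

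For the upper bound, I would insert the power-law estimate $g (2^{-j} u) \leqslant c^{}_2 \ts (2^{-j} u)^{\theta_2}$, which is legitimate since $2^{-j} u \in \bigl[ 0, \frac{1}{2} \bigr]$ for all $u \in [0,1]$ and $j \geqslant 1$. Collecting the powers of $2$ via $\sum_{j=1}^{n} j = \frac{1}{2} n (n+1)$ and bounding $u^{n \theta_2} \leqslant 1$ under the probability measure $\mu_g$ gives
\[
   F_g \bigl( 2^{-n} \bigr) \, \leqslant \, c_2^{\ts n} \, 2^{-\theta_2 n (n+1)/2} \ts ,
\]
hence $\log_2 F_g (2^{-n}) \leqslant - \frac{\theta_2}{2} n^2 + O(n)$, which already establishes the super-polynomial decay.

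The lower bound is the main obstacle. The analogous estimate $g (2^{-j} u) \geqslant c^{}_1 (2^{-j} u)^{\theta_1}$ now produces the weight $u^{n \theta_1}$, and this cannot simply be discarded, as it is small for $u$ near $0$. The remedy is to retain the mass of $\mu_g$ away from the origin, estimating $\int_0^1 u^{n \theta_1} \dd \mu_g (u) \geqslant 2^{-n \theta_1} \mu_g \bigl( \bigl[ \frac{1}{2}, 1 \bigr] \bigr)$, where $\mu_g \bigl( \bigl[ \frac{1}{2}, 1 \bigr] \bigr) = 1 - F_g \bigl( \frac{1}{2} \bigr) > 0$, again by strict monotonicity. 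This leads to $\log_2 F_g (2^{-n}) \geqslant - \frac{\theta_1}{2} n^2 + O(n)$. Finally, to pass from the dyadic points to all $x \to 0^+$, I would sandwich an arbitrary $x \in \bigl( 2^{-(n+1)}, 2^{-n} \bigr]$ between $F_g (2^{-(n+1)})$ and $F_g (2^{-n})$ by monotonicity; since $-\log_2 x \in [n, n+1)$ and both dyadic bounds are of order $-n^2 \asymp - \log_2 (x)^2$, the claimed two-sided relation $\log \bigl( F_g (x) \bigr) \asymp - \log_2 (x)^2$ follows.
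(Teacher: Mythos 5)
Your proposal is correct and is essentially the paper's own argument (the proof of Theorem~\ref{thm:eff}), just with the limits taken in the opposite order: you use the same factorisation $g_m(2^{-n}u)=2^n\bigl(\prod_{j=1}^{n}g(2^{-j}u)\bigr)\ts g_{m-n}(u)$, the same telescoping power-law estimates, the same lower-bound device of retaining the mass $\kappa=\mu_g\bigl(\bigl[\frac{1}{2},1\bigr]\bigr)>0$, and the same portmanteau/monotonicity sandwich at dyadic points. One small repair is needed in your limit of the right-hand side: the integrand $\prod_{j=1}^{n}g(2^{-j}u)$ is continuous on $[0,1]$ but \emph{not} as a function on $\TT$ (it jumps at $0$, since $g(0)=0$ under power-law scaling while $\prod_{j=1}^{n}g(2^{-j})$ need not vanish), so weak convergence against ``continuous integrands'' does not apply verbatim; instead, invoke the portmanteau theorem for bounded, $\mu_g$-a.e.\ continuous functions, which holds here by the very atomlessness of $\mu_g$ that you already use to handle the left-hand side.
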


This result may be viewed as a first step of a more general
  scaling analysis at arbitrary $x$, as the one in \cite{BGKS} for the
  TM measure, which will be a substantial task for the future. Here,
  we notice that the result of Theorem~\ref{thm:scaling} is of
  particular relevance in number theory, 
  for asymptotical results related to automatic sequences,
  and in the quantitative theory of order and local
  fluctuations, as recently studied in the context of hyperuniformity;
  see \cite{TS,Grabner,Grabner-2,Josh1,Josh2,BG} and references
  therein.  Due to the potentially far-reaching consequences on
  pattern distributions, we feel that the scaling analysis around $0$
  deserves some special attention in its own right.

The remainder of this paper is organised as follows. In
Section~\ref{sec:mug}, using specific properties of $g$-functions, we
derive Theorem~\ref{thm:mug} as a consequence of a more general
result, which is interesting on its own as well.  In
Section~\ref{sec:pp-of-mu_g}, we characterise the pure point part of
general $g$-measures and prove Theorem~\ref{thm:sing}.  Finally,
Section~\ref{sec:dist} contains properties of the distribution
function of $\mu_g$ as well as a proof of Theorem~\ref{thm:scaling},
together with more precise upper bounds on the scaling near $0$, thus
extending a result that is well known \cite{GL,BG} for
$\mu_{_\mathrm{TM}}$.

\section{Some properties of $g$-measures}\label{sec:mug}

As above, let $\TT$ denote the $1$-torus, written as $[0,1)$ with
addition modulo $1$, and let $g \geqslant 0$ be a $g$-function on
$(\TT,T)$, where $T$ is the doubling map $Tx = 2\ts x$. This means
that $g$ is Borel measurable and that we have
$g(x) + g \bigl( x + \frac{1}{2} \bigr) = 1$ for all $x\in\TT$.  Given
a $g$-function on $(\TT,T)$, we define the \emph{transfer operator}
$\varphi_g$ on a real-valued function $f$ on $\TT$ by
\[
  \bigl( \varphi_g f \bigr) (x) \, = \! \sum_{y \in T^{-1} (x)} \!
  g(y) \ts f(y) \, = \, g \bigl( \tfrac{x}{2} \bigr) \ts f \bigl(
  \tfrac{x}{2} \bigr) + g \bigl( \tfrac{x+1}{2} \bigr) \ts f \bigl(
  \tfrac{x+1}{2} \bigr) .
\]
\begin{definition}
  A shift-invariant Borel probability measure $\mu$ on $\TT$ is called
  a \emph{$g$-measure} if $(\varphi_g)_{*} \mu = \mu$ holds, where
  $\bigl((\varphi_g)_{*}\mu\bigr) (f) \defeq \mu (\varphi_g f)$ for
  $f \in \cC (\TT)$.
\end{definition}
The operator $\varphi_g$ preserves the normalisation of a measure,
hence maps probability measures to probability measures.  A short
calculation yields $(\varphi_g)_{*} \leb = 2\ts g \leb $, where $\leb$
is Lebesgue measure on $\TT$ and
$2\ts g \leb(f) = 2 \int_{\TT} g(x) f(x) \dd x$. In particular,
$ \leb$ itself is a $g$-measure precisely if $g \equiv \frac{1}{2}$
almost-surely. More generally, we obtain the following property;  compare \cite[Prop.~1]{FL98}.

\begin{lemma}\label{lem:g_k}
  For\/ $g^{}_k (x) = 2^k \prod_{j=0}^{k-1} g(2^j x)$ and any\/
  $k\in\NN$, one has\/
  $\bigl(\varphi_g^k \bigr){}_{*} \leb = g^{}_k \ts \leb$. In
  particular,\/ $g^{}_k$ is a probability density on $\TT$.
\end{lemma}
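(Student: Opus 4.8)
The plan is to argue by induction on $k$, taking the already-noted identity $(\varphi_g)_{*}\leb = 2\ts g\ts\leb = g^{}_1\ts\leb$ as the base case $k=1$. The inductive engine is the identity $(\varphi_g^{k})_{*} = (\varphi_g)_{*}\circ(\varphi_g^{k-1})_{*}$, which follows at once from the defining relation $\bigl((\varphi_g)_{*}\mu\bigr)(f) = \mu(\varphi_g f)$ together with the semigroup property $\varphi_g^{k} = \varphi_g\circ\varphi_g^{k-1}$ and associativity of operator composition. It therefore suffices to understand how a single application of $(\varphi_g)_{*}$ transforms an absolutely continuous measure $\psi\ts\leb$ with a $1$-periodic density $\psi$.

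To compute $(\varphi_g)_{*}(\psi\ts\leb)$, I would test against an arbitrary $f\in\cC(\TT)$ and expand $\bigl((\varphi_g)_{*}(\psi\ts\leb)\bigr)(f) = \int_{\TT}\psi(x)\bigl[g\bigl(\tfrac{x}{2}\bigr)f\bigl(\tfrac{x}{2}\bigr) + g\bigl(\tfrac{x+1}{2}\bigr)f\bigl(\tfrac{x+1}{2}\bigr)\bigr]\dd x$, and then substitute $u=\tfrac{x}{2}$ in the first summand and $u=\tfrac{x+1}{2}$ in the second. The first substitution sweeps out $\bigl[0,\tfrac{1}{2}\bigr)$ and the second $\bigl[\tfrac{1}{2},1\bigr)$, each contributing a Jacobian factor $2$. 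Using the $1$-periodicity of $\psi$ to identify $\psi(2u-1)=\psi(2u)$, the two pieces recombine into a single integral over $\TT$, yielding $(\varphi_g)_{*}(\psi\ts\leb) = \bigl(2\ts g\cdot(\psi\circ T)\bigr)\leb$; that is, the density transforms by the rule $\psi\mapsto 2\ts g(x)\ts\psi(2x)$.

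With this rule in hand, the induction closes by a telescoping product. Assuming $(\varphi_g^{k-1})_{*}\leb = g^{}_{k-1}\ts\leb$, the new density is $2\ts g(x)\ts g^{}_{k-1}(2x)$, and substituting $g^{}_{k-1}(2x)=2^{k-1}\prod_{j=0}^{k-2}g(2^{j+1}x)$ and reindexing gives exactly $2^{k}\prod_{j=0}^{k-1}g(2^{j}x)=g^{}_k(x)$. For the final claim, positivity of $g^{}_k$ is immediate from $g\geqslant 0$, while the normalisation $\int_{\TT}g^{}_k\dd x = 1$ follows by testing against $f\equiv 1$: the defining $g$-function relation gives $\varphi_g 1 = g\bigl(\tfrac{x}{2}\bigr)+g\bigl(\tfrac{x}{2}+\tfrac{1}{2}\bigr)=1$, hence $\varphi_g^{k}1=1$ and $\bigl((\varphi_g^{k})_{*}\leb\bigr)(1)=\leb(1)=1$, so that $g^{}_k$ is indeed a probability density.

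I do not expect a genuine obstacle here; the only points demanding care are the bookkeeping in the two-branch change of variables and the use of periodicity to merge the branches, together with keeping the direction of the pushforward identity straight. Alternatively, one could avoid the induction altogether and substitute all branches of $T^{-k}$ simultaneously, but the one-step recursion is cleaner and reuses the stated $k=1$ case.
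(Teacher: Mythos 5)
Your proof is correct, and it takes a mildly but genuinely different route from the paper's. The paper inducts at the level of the operator: it first establishes the explicit $k$-step formula $\bigl(\varphi_g^k f\bigr)(x) = 2^{-k} \sum_{y \in T^{-k}(x)} g^{}_k(y)\ts f(y)$, and then performs a single change of variables over all $2^k$ branches of $T^{-k}$ at once to obtain $\int_0^1 f(x)\ts g^{}_k(x) \dd x = \int_0^1 \bigl(\varphi_g^k f\bigr)(x) \dd x = \bigl(\bigl(\varphi_g^k\bigr){}_{*}\leb\bigr)(f)$; the normalisation claim is then read off from the previously noted fact that $\varphi_g$ preserves normalisation, which is the same observation as your $\varphi_g 1 = 1$ computation. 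You instead induct at the level of measures: base case $(\varphi_g)_{*}\leb = 2\ts g\ts\leb$ (as already noted in the paper), the semigroup identity $(\varphi_g^k)_{*} = (\varphi_g)_{*}\circ(\varphi_g^{k-1})_{*}$, and the one-step transformation rule $(\varphi_g)_{*}(\psi\ts\leb) = \bigl(2\ts g\cdot(\psi\circ T)\bigr)\ts\leb$, proved by a two-branch substitution, after which the product telescopes. Both arguments are equally elementary and rest on the same change-of-variables mechanism; the paper's version yields the $k$-fold preimage formula for $\varphi_g^k$ as a by-product (a useful identity in its own right), while yours reuses the stated $k=1$ case and makes the multiplicative structure of $g^{}_k$ transparent. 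Your attention to the genuinely delicate points --- the Jacobian factors, the use of $1$-periodicity of the density to merge the two branches (legitimate, since every density in the induction is a function on $\TT$), and the direction of the pushforward identity --- is sound, so there is no gap.
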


\begin{proof}
First, we observe inductively that 
\[
   \bigl(\varphi_g^k f\bigr)(x) \: = \:
    2^{-k} \! \! \sum_{y \in T^{-k}(x)} \! g^{}_k (y) f(y) \ts .
\]
For every $f \in \cC (\TT)$, we now obtain
\[
    \int_0^1 f(x) \, g^{}_k(x) \dd x  \, = 
    \int_0^1 \bigl( \varphi_g^k f \bigr)(x) \dd x \, = \, 
    \bigl(\bigl(\varphi_g^k \bigr){}_{*} \leb\bigr) (f)
\]
by a straightforward calculation. Since
$\bigl(\varphi_g^k \bigr){}_{*} \leb = g^{}_k \ts \leb$ is a
probability measure on $\TT$, it follows that $g^{}_k$ is a
probability density.
\end{proof}

Interpreting $(g^{}_{n})^{}_{n\in\NN}$ as a sequence of probability
measures on $\TT$, by the Banach--Alaoglu theorem, it must have an
accumulation point $\mu_g$ in the weak topology. Assume for a moment
that $g^{}_n$ converges to $\mu_g$. Then, it is easily checked that
$\mu_g$ is in fact a $g$-measure.  Below, we will give a sufficient
condition for weak convergence of $(g^{}_n )^{}_{n \in \NN}$, though
this first needs some preparation via a suitable concept. To formulate
the latter, let $f \in \cC (\TT)$ and set
\[
   f [\delta] \, =  \max_{|x-y| \leqslant \delta} | f(x) - f(y) | 
\]
for $\delta > 0$. Note that the maximum is indeed attained, as $f$ is
uniformly continuous, and that $\lim_{\delta \to 0^+} f[\delta] = 0$.

\begin{definition}\label{def:bd}
   We say that $f \in \cC (\TT)$ is of \emph{summable variation} if
\[
    f^{}_{\delta} \, \defeq \sum_{j=0}^{\infty} 
    f \bigl[ 2^{-j} \delta \bigr] \, < \, \infty \ts ,
\]
for some (equivalently all) $\delta >0$.
\end{definition}

\begin{remark}
  Note that $f[\delta]$ is increasing monotonically in $\delta$.  It
  is straightforward to verify that $f^{}_{\delta} \to 0$ as
  $\delta \to 0^+$ whenever $f$ is of summable variation, and also
  that every H\"{o}lder-continuous function is of summable
  variation. The condition that $f$ is of summable variation appears
  also under the term \emph{Dini continuous} in the literature.
    For the comparison of our results with the literature on
    $g$-measures, it is noteworthy that, if $f>0$, it is of summable
    variation precisely if $\log(f)$ has this property.  \exend
\end{remark}

Even in the case that $g > 0$, additional regularity assumptions
  are needed to conclude that there is a \emph{unique} $g$-measure
  associated to it; see \cite{BK93} for an example that shows that
  continuity alone does not suffice. Summable variation has become a
  standard assumption, and indeed guarantees both uniqueness and
  uniform convergence of $\varphi_g^n f$ to a constant when $g>0$;
  compare \cite{Walters}. It does not immediately yield such a strong
  conclusion if $g$ is allowed to have zeros, but has proved useful
  also in this more general setting \cite{ConzeRaugi90}. The reason is
  that summable variation guarantees that, for each $f \in \cC(\TT)$,
  the family $\{ \varphi_g^n f \}_{n \in \NN}$ is uniformly
  equicontinuous. Indeed, a straightforward calculation shows that
\[
   \bigl( \varphi_g f \bigr) [\delta] \, \leqslant  \,
    2 \ts | f | \, g \bigl[ \tfrac{\delta}{2} \bigr] 
    + f \bigl[ \tfrac{\delta}{2} \bigr] .
\]
from which equicontinuity is quickly deduced if $g$ has summable
variation. Hence, $\{ \varphi_g^n f \}_{n \in \NN}$ has a uniformly
converging subsequence by the Arzela--Ascoli theorem.

\begin{remark}
  The term \emph{summable variation} is somewhat more natural in the
  context of shift spaces. This is the classic setup in the context of
  the thermodynamic formalism \cite{Ruelle, Walters}. In fact, there
  is a natural identification of $(\TT,T)$ with the one-sided shift
  $(\mathbb{X},S)$, with the space $\mathbb{X} = \{0,1\}^{\NN}$ 
  and with $S$ denoting the left shift action, $(Sx)_n = x_{n+1}$ 
  for all $n \in \NN$. The identification is given by the semi-conjugation
  $(x_n)_{n \in \NN} \mapsto \sum_{j =1}^{\infty} x_j 2^{-j}$.  On the
  complement of dyadic points, this map has a well-defined inverse,
  given by the $2$-adic expansion of elements in $[0,1)$. With a few
  modifications, much of the analysis in this paper can also be
  performed on $(\mathbb{X},S)$; compare \cite{BGKS}.
  We have chosen to work with $(\TT,T)$
  because this seems more natural for the applications we have in
  mind, that is, diffraction and hyperuniformity results.  \exend
\end{remark}

The following is a mild adaptation of a result in \cite{Keane}.
  The only difference is that, instead of differentiability, we use
  the weaker summable variation property to obtain equicontinuity of
  the sequence $\{ \varphi_g^n f \}_{n \in \NN}$. The rest of the
  proof remains unchanged. Parts of the following proposition have
also been covered by \cite[Thm.~2.1]{BDEG}.

\begin{prop}[\cite{Keane}]\label{prop:bounded-distortion}
  Let\/ $g \in \cC (\TT)$ be a\/ $g$-function of summable variation,
  and assume that one of the following properties is satisfied, namely
\begin{enumerate}\itemsep=2pt
\item that\/ $g$ has at most one zero in\/ $\TT$,
\item that\/ $g$ has only finitely many zeros in\/ $\TT$, none of
  which wanders into a periodic orbit under the map\/ $T$, or
\item that all zeros of\/ $g$ lie in\/
  $\bigl[ \frac{1}{4}, \frac{3}{4} \bigr)$ or in\/
  $\bigl( \frac{1}{4}, \frac{3}{4}\ts \bigr]$.
\end{enumerate}
Then, for all\/ $f \in \cC (\TT)$, the sequence of functions\/
$(\varphi_g^k f)^{}_{k \in \NN}$ converges, uniformly on\/ $\TT$, to a
constant, denoted by\/ $\mu_g (f)$.  Further, the mapping\/
$f \mapsto \mu_g (f)$ defines a strongly mixing probability measure
on\/ $\TT$, where\/ $\mu_g$ is the unique\/ $g$-measure induced by\/
$g$. \qed
\end{prop}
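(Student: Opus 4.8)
The plan is to combine the equicontinuity estimate displayed just before the proposition with a maximum-principle argument, and then to extract every measure-theoretic conclusion from the algebra of the operator $\varphi_g$. I would begin with two monotonicity facts. As $\varphi_g$ sends a function to a convex combination of its values (the weights $g(\tfrac{x}{2})$ and $g(\tfrac{x+1}{2})$ are non-negative with sum $1$), the numbers $M_n \defeq \max_{\TT}\varphi_g^n f$ and $m_n \defeq \min_{\TT}\varphi_g^n f$ obey $M_{n+1}\le M_n$ and $m_{n+1}\ge m_n$; hence $M_n \downarrow M$ and $m_n \uparrow m$ with $m \le M$, and these limits are independent of any subsequence. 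By the equicontinuity bound displayed above together with $\|\varphi_g^n f\|_\infty \le \|f\|_\infty$, the family $\{\varphi_g^n f\}_n$ is uniformly bounded and uniformly equicontinuous, so Arzel\`a--Ascoli makes it precompact in $(\cC(\TT),\|\cdot\|_\infty)$. Since $\max$ and $\min$ are continuous for uniform convergence, every uniform subsequential limit $F$ satisfies $\max F = M$ and $\min F = m$. The convergence statement thus reduces to the single claim that each such $F$ is constant: once $F\equiv c$ we get $m = c = M$, and then $m_n \le \varphi_g^n f \le M_n$ with $m_n,M_n \to c$ forces $\varphi_g^n f \to c$ uniformly.

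The heart of the matter is that $F$ is constant, and this is where the hypotheses on the zeros of $g$ enter. I would run a maximum principle on the closed, non-empty set $A \defeq \{x\in\TT : F(x) = M\}$. For a subsequential limit $F=\lim_i \varphi_g^{n_i} f$ and each fixed $k$ one has $\varphi_g^k F = \lim_i \varphi_g^{k+n_i} f$, hence $\max \varphi_g^k F = M$. Writing $\varphi_g^k F(x) = \sum_{y\in T^{-k}(x)} w_k(y)\,F(y)$ with weights $w_k(y) = \prod_{j=0}^{k-1} g(2^j y)$ summing to $1$, any realisation of the value $M$ by such a convex combination of numbers $\le M$ forces $F(y)=M$ on every preimage of positive weight. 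Concretely, the one-step rule reads: if $\varphi_g H(x)=M$ and $\max H = M$, then $H(y)=M$ for each $y\in T^{-1}(x)$ with $g(y)>0$. Iterating this transports maximum points backwards along any orbit segment that avoids the zero set $Z\defeq\{g=0\}$, so $A$ contains all such zero-free backward images. The remaining task is to show, under each hypothesis, that these zero-free backward orbits are dense in $\TT$; as $A$ is closed, this yields $A=\TT$ and $F\equiv M$. Under (1) and (3) this is transparent: when the zeros lie in $[\tfrac14,\tfrac34)$ (or the mirrored interval), every point has a preimage in the zero-free region $[0,\tfrac14)\cup[\tfrac34,1)$, so a positive-weight branch is always available and one can steer the backward orbits into any neighbourhood.

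I expect the genuine obstacle to be hypothesis (2): finitely many zeros, none pre-periodic under $T$. Here one must exclude the possibility that the finitely many zeros cooperate with a periodic orbit to trap the pullback and sustain a non-constant $F$. The role of non-pre-periodicity is precisely that the forward orbit of each zero is infinite and eventually avoids any prescribed periodic orbit, so the zeros obstruct only a nowhere-dense set of backward branches, and the surviving zero-free backward orbits remain dense. Converting ``no zero wanders into a periodic orbit'' into this density statement is the delicate combinatorial-dynamical point, and is the only place where more than the convexity of $\varphi_g$ is needed.

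It remains to read off the measure-theoretic assertions, which are formal consequences of operator identities valid for any $g$-function. The functional $\mu_g(f) \defeq \lim_n \varphi_g^n f$ is linear, positive and normalised ($\varphi_g 1 = 1$), hence a Borel probability measure by the Riesz representation theorem. It is a $g$-measure because $\mu_g(\varphi_g f) = \lim_n \varphi_g^{n+1} f = \mu_g(f)$, that is, $(\varphi_g)_*\mu_g = \mu_g$. Invariance and mixing both flow from the intertwining identity $\varphi_g\bigl((h\circ T)\,f\bigr) = h\cdot\varphi_g f$, whose iterate is $\varphi_g^n\bigl((h\circ T^n)\,f\bigr) = h\cdot\varphi_g^n f$: with $f=1$ it gives $\varphi_g^n(h\circ T^n)=h$, so $\mu_g(h\circ T^n)=\mu_g(h)$ after applying $\mu_g$ and using $\mu_g\circ\varphi_g^n=\mu_g$; and in general $\mu_g\bigl((h\circ T^n)f\bigr) = \mu_g\bigl(h\,\varphi_g^n f\bigr) \to \mu_g(h)\,\mu_g(f)$, since $\varphi_g^n f\to\mu_g(f)$ uniformly, which is strong mixing. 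Finally, any $g$-measure $\nu$ satisfies $\nu(f)=\nu(\varphi_g^n f)\to\mu_g(f)$, whence $\nu=\mu_g$ and uniqueness follows.
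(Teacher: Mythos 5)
Your skeleton is sound and is essentially the route of Keane that the paper itself adopts: equicontinuity from summable variation plus Arzel\`a--Ascoli, the monotone extrema $M_n \downarrow M$ and $m_n \uparrow m$, the convexity/maximum principle forcing a subsequential limit $F$ to equal $M$ on every positive-weight preimage, and the closing operator identities for the $g$-measure property, $T$-invariance, strong mixing and uniqueness (the paper gives no self-contained proof of this proposition: it proves equicontinuity via the displayed modulus-of-continuity bound and defers the rest to \cite{Keane}, quoting the Conze--Raugi proximality criterion \cite{ConzeRaugi90} in the remark that follows). Your final paragraph on the measure-theoretic consequences is correct as stated. The problem is the core step, the only place where hypotheses (1)--(3) enter. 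For case (2) you concede the point yourself: naming ``the delicate combinatorial--dynamical point'' and asserting that non-pre-periodicity of the zeros should yield the required density is not a proof, and that case is precisely where the content of the proposition lies.

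More seriously, the criterion you reduce everything to --- density in $\TT$ of the zero-free backward trees --- is not merely unproved under (1) and (3); it is \emph{false} there, so ``transparent'' cannot be repaired. If $g\bigl(\tfrac{1}{2}\bigr)=0$, which is allowed by both (1) and (3) (e.g.\ $g(x)=\tfrac{1}{2}\bigl(1+\cos(2\pi x)\bigr)$), then the preimage $\tfrac{1}{2}$ of $0$ always carries weight $g\bigl(\tfrac{1}{2}\bigr)=0$, so the zero-free tree of $0$ is $\{0\}$: if the maximum of $\varphi_g^k F$ sits at $0$, your argument transports nothing anywhere. Under (3) the failure is global: take a Lipschitz $g$-function vanishing exactly on $\bigl[\tfrac{1}{3},\tfrac{2}{3}\bigr]$. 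Your guaranteed branch into $\bigl[0,\tfrac{1}{4}\bigr)\cup\bigl[\tfrac{3}{4},1\bigr)$ is the map that prepends the leading binary digit, $0.b_1b_2\ldots \mapsto 0.b_1b_1b_2\ldots$, so the ``safe'' backward orbit of \emph{every} point converges to $0$ and cannot be steered into a prescribed neighbourhood; in this example no point of $\TT$ has a dense zero-free tree (the tree of $0.9$, say, is the single branch $0.95, 0.975, 0.9875,\ldots$, accumulating only at $0$). Yet in both examples the proposition is true, with $\varphi_g^n f \to f(0)$ uniformly, i.e.\ $\mu_g=\delta_0$. What your maximum-principle machinery actually needs is not density but proximality in the sense of \cite{ConzeRaugi90}: the closure of the zero-free tree of a maximum point of $F$ --- on which $F\equiv M$ --- must intersect the closure of the tree of a minimum point --- on which $F\equiv m$ --- which forces $M=m$ and $F$ constant. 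Under (3) this follows at once from your own observation, since every tree closure contains the limit $0$ of the safe branch; but under (1) and (2) establishing proximality (or density where it does hold) is the real work carried out in \cite{Keane}, and it is absent from your argument.
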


\begin{remark}
  Proposition~\ref{prop:bounded-distortion} also follows from results
  by Conze and Raugi \cite{ConzeRaugi90}. In fact, under the
  assumption of summable variation, they give a very nice
  characterisation for the uniqueness of $g$-measures and the uniform
  convergence of $\varphi_g^k f$ in terms of \emph{proximality}. Given
  $d \in \NN$, a closed subset $F \subseteq \TT$ is said to be
  \emph{$d$-invariant} if, for each $x \in F$ and $y \in T^{-d} (x)$
  such that $g_n(y) > 0$, it follows that $y \in F$. The function $g$
  is said to be \emph{$d$-proximal} if any two closed $d$-invariant
  subsets of $\TT$ intersect. It was shown in \cite[Secs.~V and
  VI]{ConzeRaugi90} (for the interval $[0,1]$ instead of $\TT$) that a
  $g$-measure is unique precisely if $g$ is $1$-proximal, and that
  $(\varphi^k_g f)_{k \in \NN}$ converges uniformly to a constant if
  and only if $g$ is $d$-proximal for all $d \in \NN$. Indeed, it is
  straightforward to verify that every $g$-function that satisfies any
  of the properties listed in
  Proposition~\ref{prop:bounded-distortion} is in fact $d$-proximal
  for all $d \in \NN$. We can also use this characterisation to give
  weaker conditions under which the conclusion of
  Proposition~\ref{prop:bounded-distortion} holds. For example, we can
  replace $(2)$ and $(3)$ by
\begin{enumerate}
\item[$(2')$] the set $g^{-1}(1)$ is finite and does not contain a
  complete $T$-orbit,
\item[$(3')$] all zeros of $g$ lie in
  $\bigl[ \frac{1}{6}, \frac{5}{6} \bigr)$ or in
  $\bigl( \frac{1}{6}, \frac{5}{6}\ts \bigr]$,
\end{enumerate}
respectively. Since Proposition~\ref{prop:bounded-distortion} in its
present form is sufficient for our purposes, we leave the proof of
this stronger result to the interested reader --- the major ingredients
can be found in \cite{ConzeRaugi90}, some details will be given
in \cite{Diss}. \exend
\end{remark}

With Proposition~\ref{prop:bounded-distortion} at hand,
Theorem~\ref{thm:mug} becomes a direct consequence as follows.

\begin{proof}[Proof of Theorem~$\ref{thm:mug}$]
  Under the assumptions of Proposition~\ref{prop:bounded-distortion},
  we obtain
\[
   \bigl( g^{}_n \leb \bigr) (f) \, = \, \leb(\varphi_g^n f)
   \: \xrightarrow{\, n \to \infty \,} \: \mu_g (f) \ts ,
\]
for all $f \in \cC (\TT)$, where we have used Lemma~\ref{lem:g_k} and
dominated convergence.
\end{proof}

\section{The pure point part of $\mu_g$}\label{sec:pp-of-mu_g}

Next, we want to understand the Lebesgue decomposition
$\mu_g = \mu_{g,\mathsf{pp}} + \mu_{g,\mathsf{sc}} +
\mu_{g,\mathsf{ac}}$.  To this end, let us first summarise some useful
properties of $g$-measures.

\begin{prop}\label{prop:useful}
  The\/ $g$-measure\/ $\mu_g$ of a good\/ $g$-function satisfies the
  following properties.
\begin{enumerate}\itemsep=2pt
\item Either one has\/ $\mu_g = \leb$, or\/ $\mu_g$ is singular with
  respect to\/ $\leb$. In particular, when\/ $\mu_g \ne \leb$, the
  absolutely continuous part of\/ $\mu_g$ vanishes.
\item One has\/ $\mu_g = \leb$ if and only if\/ $g (x) = \frac{1}{2}$
  for a.e.~$x\in\TT$. Within\/ $\cC (\TT)$, this means that\/ $g$ is
  identical to the constant function\/ $\frac{1}{2}$.
\item Either one has\/ $\mu_{g,\mathsf{pp}} = \mu_g$, or\/ $\mu_g$ 
  has no pure point part.
\item The measure\/ $\mu_g$ is always of pure type.   
\end{enumerate}  
\end{prop}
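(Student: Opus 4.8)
The plan is to establish the four properties in a logical order, using the fact that $\mu_g$ is a $g$-measure that is shift-invariant and ergodic for $(\TT,T)$, where ergodicity follows from the strong mixing asserted in Proposition~\ref{prop:bounded-distortion}. The key structural observation is that Lebesgue measure $\leb$ is itself an ergodic invariant measure for the doubling map, so $\mu_g$ and $\leb$ are either equal or mutually singular; this is a standard dichotomy for distinct ergodic measures of the same dynamical system and gives item~(1) immediately, with the remark on the vanishing absolutely continuous part following from mutual singularity. For item~(2), I would argue that $\mu_g = \leb$ forces the $g$-measure equation $(\varphi_g)_* \leb = \leb$; since we computed $(\varphi_g)_* \leb = 2 g \ts \leb$ earlier, this yields $2 g = 1$ almost everywhere, hence $g \equiv \frac{1}{2}$ on a set of full measure, and continuity of $g$ upgrades this to the identity $g \equiv \frac{1}{2}$ everywhere on $\TT$. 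The converse is the trivial computation that the constant $g$-function reproduces $\leb$.

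For item~(3), the natural approach is to exploit $T$-invariance of the pure point part. The decomposition $\mu_g = \mu_{g,\mathsf{pp}} + \mu_{g,\mathsf{sc}} + \mu_{g,\mathsf{ac}}$ is canonical, and since $\mu_g$ is $T$-invariant and $T$ maps the three spectral components into themselves (the pushforward of a purely atomic, respectively purely singular continuous, respectively absolutely continuous measure retains its type under the doubling map, which is a nonsingular transformation), each part is separately $T$-invariant. Normalising $\mu_{g,\mathsf{pp}}$ to a probability measure then gives an invariant probability measure; by ergodicity of $\mu_g$, an invariant subset supporting the atoms must have $\mu_g$-measure $0$ or $1$, forcing $\mu_{g,\mathsf{pp}}$ to be either $0$ or all of $\mu_g$. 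I would phrase this through the set of atoms: if $\mu_g(\{x\}) > 0$ for some $x$, then by invariance and ergodicity the orbit closure carries full measure, so $\mu_g$ is purely atomic.

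Item~(4) is then essentially a corollary: if $\mu_g$ has a pure point part it equals $\mu_g$ by~(3), so $\mu_g$ is purely pure point; otherwise $\mu_{g,\mathsf{pp}} = 0$, and the remaining decomposition is $\mu_g = \mu_{g,\mathsf{sc}} + \mu_{g,\mathsf{ac}}$, where by~(1) the absolutely continuous part is nonzero only in the case $\mu_g = \leb$, which is purely absolutely continuous. In all cases exactly one spectral component survives, so $\mu_g$ is of pure type. The main obstacle I anticipate is making item~(3) fully rigorous, specifically justifying that the Lebesgue spectral components of a $T$-invariant measure are themselves $T$-invariant: this requires that $T$ be nonsingular with respect to $\leb$ (so that the absolutely continuous class is preserved) together with the uniqueness of the Lebesgue decomposition, and some care is needed because $T$ is not invertible, so one argues via the pushforward $T_*$ rather than a measure-theoretic conjugacy. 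Once invariance of the parts is secured, the ergodic argument is routine.
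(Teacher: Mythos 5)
Your proposal is correct, and for items (1) and (2) it matches the paper's argument exactly: the ergodic dichotomy (distinct ergodic invariant measures for $(\TT,T)$ are mutually singular) gives (1), and the identity $(\varphi_g)_{*}\leb = 2\ts g\ts\leb$ together with continuity of $g$ gives (2). For items (3) and (4), however, you take a genuinely different route. The paper works with the \emph{transfer operator}: it verifies that $(\varphi_g)_{*}(\mu_{g,\mathsf{pp}}) = \bigl((\varphi_g)_{*}\mu_g\bigr)_{\mathsf{pp}} = \mu_{g,\mathsf{pp}}$, so that a nonzero pure point part, once normalised, is itself a $g$-measure and must coincide with $\mu_g$ by the uniqueness in Proposition~\ref{prop:bounded-distortion}; purity (4) is then a direct consequence. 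You instead push forward with $T_{*}$: each spectral component of a $T$-invariant measure is separately $T$-invariant (uniqueness of the Lebesgue decomposition plus preservation of each spectral type under $T_{*}$), and then ergodicity of $\mu_g$ --- its extremality among invariant probability measures --- forbids a nontrivial splitting into mutually singular invariant pieces. Both arguments are sound, and each buys something: the paper's argument shows purity for \emph{any} $g$-function admitting a unique $g$-measure, with no ergodicity needed (this is exactly the point of the remark following the paper's proof, citing \cite{BDEG}); yours applies verbatim to any ergodic $T$-invariant measure with no reference to the $g$-structure at all, once ergodicity (here supplied by the strong mixing in Theorem~\ref{thm:mug}) is in hand.

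One small refinement to the technical point you flag yourself: nonsingularity of $T$ (preimages of null sets are null) only yields that $T_{*}$ preserves absolute continuity. To see that $T_{*}$ of a \emph{singular} measure remains singular, you need the forward property that $T$ maps Lebesgue-null sets to Lebesgue-null sets, which is not a consequence of nonsingularity in general. For the doubling map it is immediate --- $T$ is affine with slope $2$ on each of $\bigl[0,\frac{1}{2}\bigr)$ and $\bigl[\frac{1}{2},1\bigr)$, so $\leb\bigl(T(N)\bigr) \leqslant 2\ts\leb(N)$ --- but this should be stated explicitly rather than subsumed under nonsingularity. With that said, your invariance claim holds, and the ergodicity argument (the set of atoms $A$ satisfies $A \subseteq T^{-1}A$ and $\mu_g(T^{-1}A\setminus A)=0$, hence has measure $0$ or $1$) goes through as you describe.
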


\begin{proof}
Properties  (1) and (2) immediately follow from Theorem~\ref{thm:mug} 
  and the observation made before Theorem~\ref{thm:sing}. 
  To establish (3), we simply verify
  $(\varphi_g)_{*} (\mu_{g, \mathsf{pp}}) =
  ((\varphi_g)_{*}\mu_g)_{\mathsf{pp}} = \mu_{g, \mathsf{pp}}$. Then,
  the uniqueness stated in Proposition~\ref{prop:bounded-distortion}
  gives the claim, and (4) becomes a straightforward consequence.
\end{proof}

More generally, $\mu_g$ is always of pure type whenever it is
  unique for a given $g$-function. This follows from the fact that
  $\mu_{g,\alpha}$ is separately invariant under $(\varphi_g)_{*}$ for
  $\alpha \in \{\mathsf{ac},\mathsf{sc},\mathsf{pp} \} $ if $\mu_g$ is
  invariant \cite[Thm.~1.2]{BDEG}; compare \cite[Lem.~2.2]{DubF66}. 

In what follows, we explore conditions for the existence of a pure
point part. Since some of the observations hold in full generality,
without extra complications with the proofs, we drop the assumption
that $g$ is a good $g$-function for a while. Recall from \cite{Keane}
that the set of $g$-measures coincides precisely with the set of
$T$-invariant measures on $\TT$.  From this, we conclude that the pure
point part of an arbitrary $g$-measure is supported on complete
(forward) $T$-orbits, the latter denoted by
$\cO^{}_{T} (x) \defeq \big\{ T^k x : k \in \NN^{}_0 \big\}$ in what
follows.  In fact, it is also known that $\mu_{g,\mathsf{pp}}$ is
  supported on $g^{-1}(1)$ \cite{ConzeRaugi90}. We give the precise
  statement with a short proof for convenience.

\begin{prop}\label{prop:atom-orbits} 
  Suppose\/ $\mu_g (\{ x \}) > 0$. Then, $x$ is a periodic point of\/
  $T$, and we have\/ $g (y) = 1$ together with\/
  $\mu_g (\{ y \}) = \mu_g (\{ x \})$ for every\/
  $y \in \cO^{}_{T} (x)$.
\end{prop}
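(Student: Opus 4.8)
The plan is to reduce the whole statement to a single pointwise identity relating the mass of an atom to the mass of its $T$-image, extracted from the defining fixed-point equation $(\varphi_g)_{*}\mu_g = \mu_g$. First I would establish that
\[
   \mu_g(\{ z \}) \, = \, g(z)\, \mu_g(\{ Tz \})
\]
holds for every $z \in \TT$. To prove this, I would approximate the indicator $\mathbbm{1}_{\{z\}}$ from above by a decreasing sequence of continuous bump functions $f_n$ supported on neighbourhoods shrinking to $\{z\}$, so that $\mu_g(f_n) \to \mu_g(\{z\})$ by monotone convergence. The explicit two-term form of the transfer operator gives $\varphi_g \mathbbm{1}_{\{z\}} = g(z)\, \mathbbm{1}_{\{Tz\}}$, and since $\varphi_g f_n \to \varphi_g \mathbbm{1}_{\{z\}}$ pointwise with integrands bounded by $1$, dominated convergence yields $\mu_g(\varphi_g f_n) \to g(z)\, \mu_g(\{Tz\})$; comparing with $\mu_g(\varphi_g f_n) = \mu_g(f_n)$ gives the formula. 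I would also record the companion identity $\mu_g(\{Tz\}) = \mu_g(\{z\}) + \mu_g(\{z'\})$, where $z' = z + \tfrac12$ is the second preimage of $Tz$; this follows by adding the key formula for $z$ and $z'$ and using $g(z) + g(z') = 1$, or directly from $T$-invariance of $\mu_g$.

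With this formula in hand, suppose $\mu_g(\{x\}) = m > 0$. Then $g(x) > 0$ and $\mu_g(\{Tx\}) = m/g(x) \geqslant m$, and iterating shows that every point $T^k x$ of the forward orbit carries mass at least $m$. As $\mu_g$ is a probability measure, it has at most $1/m$ atoms of mass $\geqslant m$, so $\cO^{}_{T}(x)$ is finite and $x$ is preperiodic: the orbit is a (possibly empty) tail feeding into a cycle $C$ of some length $p$.

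Next I would pin down $g$ and the masses along the cycle. Writing the key formula for each $z \in C$ and taking the product over $C$, the factors $\mu_g(\{Tz\})$ reproduce $\prod_{z \in C}\mu_g(\{z\})$ because $T$ permutes $C$ cyclically; cancelling these strictly positive masses forces $\prod_{z\in C} g(z) = 1$, and since $0 \leqslant g \leqslant 1$ this means $g \equiv 1$ on $C$. Substituting $g \equiv 1$ back into the key formula then shows that all atoms on $C$ share one common mass $c > 0$.

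Finally, to upgrade preperiodic to periodic, I would rule out a nonempty tail. If the preperiod were $i \geqslant 1$, the cycle-entry point $T^i x$ would have two \emph{distinct} preimages inside the orbit, the tail point $T^{i-1}x$ and the cycle point $T^{i+p-1}x$; the companion identity then reads $c = \mu_g(\{T^{i-1}x\}) + c$, forcing $\mu_g(\{T^{i-1}x\}) = 0$ and contradicting $\mu_g(\{T^{i-1}x\}) \geqslant m > 0$. Hence $i = 0$, so $x$ is periodic, $\cO^{}_{T}(x) = C$, and therefore $g(y) = 1$ and $\mu_g(\{y\}) = c = m = \mu_g(\{x\})$ for every $y \in \cO^{}_{T}(x)$. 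The one genuinely delicate point is the passage from the continuous-function identity $(\varphi_g)_{*}\mu_g = \mu_g$ to the atomic relation above; once that approximation argument is in place, the remainder is elementary orbit combinatorics.
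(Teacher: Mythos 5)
Your proposal is correct and follows essentially the same route as the paper: both arguments rest on the two atomic identities $\mu_g(\{z\}) = g(z)\,\mu_g(\{Tz\})$ (the fixed-point equation $(\varphi_g)_{*}\mu_g = \mu_g$ extended from $\cC(\TT)$ to indicators, which the paper justifies by regularity and you justify by an explicit bump-function approximation) and $\mu_g(\{Tz\}) = \mu_g(\{z\}) + \mu_g\bigl(\{z+\tfrac12\}\bigr)$ (from $T$-invariance), with mass monotonicity forcing a finite orbit and a mass-balance contradiction excluding any non-periodic part. Your reorganization --- deriving $g\equiv 1$ on the cycle via the product trick first and then ruling out a preperiodic tail at the cycle-entry point --- is a harmless permutation of the paper's steps, which instead establish periodicity by backwards induction from the orbit collision and then read off $g\equiv 1$ from the constancy of the atomic masses.
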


\begin{proof}
  Since $\mu_g$ is $T$-invariant, we have
  $\mu_g ( \{ Tx \} ) = \mu_g ( \{ x \} ) + \mu_g ( \{ x + \frac{1}{2}
  \} )$, which implies 
\[
   0 \, < \, \mu_g ( \{ x \} ) \, \leqslant \, \mu_g ( \{ Tx \} ) 
   \, \leqslant \, \cdots \, \leqslant \, \mu_g ( \{ T^n x \} ) 
\]
for all $n \in \NN$. This shows that $\cO^{}_{T} (x)$ must be finite,
and that $\mu_g$ is ultimately constant on this orbit.  Hence, there
exist $j,k \in \NN$ with $j < k$ such that $T^j x = T^k x$.  But then,
we either have $T^{j-1} x = T^{k-1} x$ or
$T^{j-1} x = T^{k-1}x + \frac{1}{2}$. The latter case is impossible
because
\[
    \mu_g ( \{ T^{k-1}x \} ) \, = \, \mu_g ( \{ T^k x \} )  \, = \,
    \mu_g ( \{ T^{k-1} x \} ) + \mu_g ( \{ T^{k-1} x + \tfrac{1}{2} \} )
\]
implies that $\mu_g ( \{ T^{k-1} x + \frac{1}{2} \} ) = 0$, which
contradicts
$\mu_g ( \{ T^{j-1} x \} ) \geqslant \mu_g ( \{ x \} ) > 0$.  We
conclude that $(T^kx)_{k\in\NN_0}$ is periodic and 
$\mu_g$ is constant on $\cO^{}_{T} (x)$.

Since $\mu_g$ is a regular Borel measure, every integrable function
$h$ satisfies
\[
     \mu_g (h) \, = \, \bigl( (\varphi_g)_{*} \mu_g \bigr) (h) 
     \, = \, \mu_g  (\varphi_g h) \ts .
\]
In particular, for $y \in \cO^{}_{T} (x)$ and $h = 1_{\{ y \}}$, we
obtain $\varphi_g 1_{\{ y \}} = g(y) \ts 1_{\{ T y \}}$ and thus
also $\mu_g (1_{\{ y \}}) = g(y) \ts\ts \mu_g (1_{\{T y \}})$. This
implies $g(y) = 1$ for all $y \in \cO^{}_{T} (x)$.
\end{proof}

\begin{remark}
  It is not difficult to verify that some kind of converse of the
  statement in Proposition~\ref{prop:atom-orbits} also holds.  Indeed,
  whenever $x = T^p x$ and $g(T^j x) = 1$ for all
  $0 \leqslant j \leqslant p-1$, the measure
  $\mu = \frac{1}{p} \sum_{j = 0}^{p-1} \delta^{}_{T^j x}$ is a
  $g$-measure.  This observation can be used to construct examples of
  $g$-functions that give rise to more than one $g$-measure; compare
  \cite{ConzeRaugi90, Keane}.  \exend
\end{remark}

Returning to the case of good $g$-functions, we are now ready to prove
Theorem~\ref{thm:sing}.

\begin{proof}[Proof of Theorem~$\ref{thm:sing}$]
  The \textsf{ac} case is clear from properties (1) and (2) of
  Proposition~\ref{prop:useful}.  Next, assume that $\mu_g$ has a 
  non-trivial pure point part and let $x \in \TT$ be such that
  $\mu_g (\{ x \}) > 0$. By Proposition~\ref{prop:atom-orbits}, there
  is some $p \in \NN$ with $T^p x = x$ and $g(T^j x) = 1$ for all
  $0 \leqslant j \leqslant p-1$. We now proceed with a case
  distinction as in Theorem~\ref{thm:mug}.

  First, assume that $g$ has only one zero in $\TT$, say $z$. Then,
  there is precisely one $x\in\TT$ with $g(x) = 1$, namely
  $x=z+\frac{1}{2}$. But this implies $p=1$, hence $T  x = x$, and
  thus $x = 0$. Since this point is unique and $\mu_g$ is a
  probability measure on $\TT$, we get $\mu_g = \delta^{}_{0}$ in this
  case.

  Second, assume that $g$ has only finitely many zeros, none of which
  wanders into a periodic orbit. However, if $\mu_g (\{ x \})>0$, we
  know that $z=x+\frac{1}{2}$ must be a zero of $g$ with $T x = T z$,
  which is impossible because $x$ is a periodic point of $T$, and
  $\mu_{g,\mathsf{pp}} = 0$ in this case.

  Third, assume that all zeros of $g$ lie in
  $\bigl[ \frac{1}{4}, \frac{3}{4} \bigr)$ or in
  $\bigl( \frac{1}{4}, \frac{3}{4} \ts \bigr]$, and let $I$ be either
  of these intervals. If $x\neq 0$, there is some $k \in \NN^{}_{0}$
  such that $T^k x \in I$. But then,
  $z = T^k x + \frac{1}{2} \notin I$ with $g(z) =0$, a contradiction.
  So, we do not get any further location beyond the one from our first
  case.

  Together, this shows that, for a good $g$-function,
  $\mu_g (\{ x \})>0$ is only possible for $x=0$, in which case
  $g \bigl( \frac{1}{2}\bigr) = 0$.  Conversely, assume
  $g \bigl( \frac{1}{2} \bigr) = 0$, which is equivalent to
  $g(0) = 1$. Then, $(\varphi_g f)(0) = f(0)$ for every
  $f \in \cC(\TT)$.  Since $\varphi_g^k f$ converges pointwise to
  $\mu_g (f)$, this yields
\[
    \mu_g(f) \, = \lim_{k \to \infty} \bigl( \varphi_g^k f \bigr) (0) 
    \, = \, f(0) \ts , 
\]
and thus $\mu_g = \delta^{}_0$, which settles the \textsf{pp} case.

Finally, if $\mu_g \neq \mu_{g, \mathsf{pp}}$,
Proposition~\ref{prop:useful} implies that the measure $\mu_g$ cannot
have any pure point part, and is singular relative to $\leb$. The
statement for the \textsf{sc} case is then clear.
\end{proof}

\begin{remark}
  If $\mu$ is a general, $T$-invariant probability measure on $\TT$,
  we can conclude that $\mu_{\mathsf{ac}} = c \ts \leb$ for some
  $c\in [0,1]$. First, it is clear that each of $\mu_{\mathsf{pp}}$,
  $\mu_{\mathsf{sc}}$ and $ \mu_{\mathsf{ac}}$ is separately
  $T$-invariant, so consider $\nu = \mu_{\mathsf{ac}}$. With
  $f^{}_n (x) \defeq \ee^{2\pi \ii n x}$ and a simple calculation, we
  obtain the Fourier--Stieltjes coefficients of $\nu$ as
\[
  \widehat{\nu} (n) \, = \, \nu (f^{}_n) \, = \, \nu (f^{}_n \nts
  \circ T) \, = \, \nu (f^{}_{2n}) \, = \, \widehat{\nu} (2n)
\]
for $n \in \ZZ$, where $\widehat{\nu} (0) = \nu (\TT)$.  Any $n \ne 0$
has a unique representation as $n = 2^k (2m+1)$. Then, by a standard
application of the Riemann--Lebesgue lemma, the above doubling
relation forces $\widehat{\nu} (2m+1) = 0$ for all $m\in\ZZ$ and thus
$\widehat{\nu}(n) = 0$ for all $n \neq 0$. This implies our claim with
$c = \nu (\TT) \in [0,1]$, by the uniqueness of the Fourier--Stieltjes
coefficients.  \exend
\end{remark}

\section{The Distribution Function $F_{g}(x)$}\label{sec:dist}

In this section, we examine the distribution function
$F_g (x) \defeq \mu_g \bigl( [0,x] \bigr)$. We first prove that
$F_g (x)$ is strictly increasing with $x$ if $g$ has at most countably
many zeros. This generalises the classic result
  \cite[Lem.~2.1]{Walters} that $\mu_g$ has full support if $g>0$.
Then, we restrict to $g$-functions with power-law scaling and prove an
effective result in showing that the scaling of $F_g (x)$ near zero is
super-polynomial. In particular, we prove
Theorem~\ref{thm:scaling}. This is also related to the analysis of
hyperuniform structures, and shows that further connections to
number-theoretic questions exist.  In particular, the deviation from a
power-law scaling, as known from the TM sequence (see \cite{GL,BG} and
references therein), is not at all unusual.

\begin{theorem}
  Let\/ $g\in\cC(\TT)$ be a good\/ $g$-function with at most countably
  many zeros, and suppose\/ $g \bigl( \frac{1}{2} \bigr) \neq
  0$. Then, the distribution function\/ $F_{g} (x)$ is strictly
  increasing in\/ $x$.  In particular, every open interval has
  positive\/ $\mu_g$-measure.
\end{theorem}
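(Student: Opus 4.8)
The plan is to show that every non-degenerate subinterval of $\TT$ carries positive $\mu_g$-mass; since $\mu_g$ will turn out to be non-atomic, this is equivalent to the asserted strict monotonicity of $F_g$. First I would record that, because $g$ is good and $g\bigl(\frac12\bigr)\neq 0$, Theorem~\ref{thm:sing} excludes the (\textsf{pp}) case, so that $\mu_g$ is either $\leb$ or singular continuous; in either case $\mu_g$ has no atoms, and hence every countable subset of $\TT$ is $\mu_g$-null. As every non-degenerate interval contains a dyadic interval $I_w = \bigl[k2^{-n},(k+1)2^{-n}\bigr)$ for suitable $n$ and $0\le k<2^n$, it then suffices to prove that $\mu_g(I_w)>0$ for each such dyadic interval.

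The key step is a transfer-operator identity for $\mu_g(I_w)$. For a word $w=w_1\cdots w_n\in\{0,1\}^n$, write $\tau_0(x)=\tfrac{x}{2}$ and $\tau_1(x)=\tfrac{x+1}{2}$ for the two inverse branches of $T$, and set $\tau_w \defeq \tau_{w_1}\circ\cdots\circ\tau_{w_n}$, which maps $\TT$ homeomorphically onto $I_w$ and is the inverse branch of $T^n$ over $I_w$. Exactly one of the $2^n$ points of $T^{-n}(x)$ lies in $I_w$, namely $\tau_w(x)$, so the formula recorded in the proof of Lemma~\ref{lem:g_k} gives $\bigl(\varphi_g^n \mathbbm{1}_{I_w}\bigr)(x) = 2^{-n} g_n(\tau_w x) = \prod_{j=0}^{n-1} g\bigl(T^j\tau_w x\bigr)$. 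Iterating the invariance $\mu_g(h)=\mu_g(\varphi_g h)$, valid for integrable $h$ as noted in the proof of Proposition~\ref{prop:atom-orbits}, then yields
\[
   \mu_g(I_w) \, = \, \mu_g\bigl(\varphi_g^n \mathbbm{1}_{I_w}\bigr)
   \, = \int_{\TT} \prod_{j=0}^{n-1} g\bigl(T^j \tau_w x\bigr)
   \dd\mu_g(x) \ts .
\]

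It remains to show that this integral is strictly positive. The integrand is non-negative and continuous, and it vanishes at $x$ only if some factor does, that is, only if $T^j\tau_w x$ lies in the zero set $Z$ of $g$ for some $0\le j\le n-1$. Here I would use that $T^j\tau_w = \tau_{w_{j+1}\cdots w_n}$ is injective, so that, $Z$ being at most countable by hypothesis, each set $\{x : T^j\tau_w x\in Z\}$ is at most countable; hence their finite union $B$ is countable and therefore $\mu_g$-null. Thus the integrand is strictly positive $\mu_g$-almost everywhere, and since $\mu_g(\TT)=1$ its integral is strictly positive, giving $\mu_g(I_w)>0$.

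The main obstacle is the positivity step, which is exactly where the two hypotheses enter: the countability of $Z$ forces the exceptional set $B$ to be countable, while the exclusion of the atomic case $\mu_g=\delta^{}_0$ guarantees that $\mu_g$ is non-atomic, so that a countable set is genuinely negligible. Were $\mu_g$ allowed an atom, the continuous integrand could be supported away from it and the argument would break down; this matches the fact that $F_g$ fails to be strictly increasing precisely in the (\textsf{pp}) case.
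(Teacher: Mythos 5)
Your proof is correct, and it reaches the paper's central identity by a genuinely different mechanism. Both arguments share the same skeleton: reduce to dyadic intervals, express $\mu_g(I)$ as the $\mu_g$-integral of the function $x \mapsto 2^{-k}\ts g^{}_k(\tau_w x)$ (your $\varphi_g^k \mathbbm{1}_{I_w}$ is exactly the paper's $f(x) = 2^{-k} g^{}_k\bigl(2^{-k}(j+x)\bigr)$ in affine coordinates), and then conclude positivity from countability of the zero set together with non-atomicity of $\mu_g$, which both proofs extract from Theorem~\ref{thm:sing} via $g\bigl(\tfrac12\bigr)\neq 0$. The difference lies in how that identity is obtained. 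The paper works with the weak limit directly: it uses $g^{}_n\leb \to \mu_g$ from Theorem~\ref{thm:mug}, the factorisation $g^{}_n = g^{}_k\cdot(g^{}_{n-k}\circ T^k)$ and a change of variables, invoking continuity of $\mu_g$ to justify convergence on the interval (the portmanteau step). You instead use the fixed-point property $(\varphi_g)_{*}\mu_g = \mu_g$, extended from $\cC(\TT)$ to indicator functions, together with the explicit computation $\varphi_g^n \mathbbm{1}_{I_w} = 2^{-n}\ts g^{}_n \circ \tau_w$; this extension to non-continuous integrands is precisely what the paper itself invokes in the proof of Proposition~\ref{prop:atom-orbits}, so your argument rests on the same footing. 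Your route is more algebraic and sidesteps the weak-convergence and boundary-mass subtleties; the paper's route stays closer to the constructive definition of $\mu_g$ as a limit of densities and needs nothing beyond what weak convergence provides. Two cosmetic points: $\tau_w$ is not a homeomorphism of $\TT$ onto $I_w$ (as a self-map of the torus it is discontinuous at the wrap-around point), and for the same reason your integrand need not be continuous on $\TT$ --- but your argument only uses injectivity and Borel measurability of $T^j\tau_w$, so nothing breaks.
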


\begin{proof}
  It suffices to show that $\mu_g(I)>0$ for every interval of the form
  $I = [ 2^{-k} j , 2^{-k} (j+1) ]$, with $k \in \NN$ and
  $0 \leqslant j \leqslant 2^k \! - \nts 1$. Since $\mu_g$ is
  continuous as a measure, weak convergence
  $g^{}_n \xrightarrow{\ts n\to\infty \ts} \ts \mu_g$ yields
\begin{align*}
   \mu_g(I) & \, = \lim_{n \to \infty} \int_I g^{}_n(x) \dd x
   \, =  \lim_{n \to \infty} \int_I   g^{}_k (x)\, 
                g^{}_{n-k} (2^k x) \dd x \\[2mm]
    & \, = \lim_{n \to \infty} \int_0^1 2^{-k}  g^{}_k \bigl(
         2^{-k} ( j + y) \bigr) \, 
         g^{}_{n-k} (y) \dd y \, = \, \mu_g (f) \ts ,
\end{align*}
where $x \mapsto f (x) = 2^{-k} g^{}_k \bigl( 2^{-k} ( j + x)\bigr)$
defines a non-negative function that is bounded by $1$ and has at 
most countably many zeros. Set
$A_n = \big\{ x \in \TT : f (x) \geqslant \frac{1}{n} \big\}$ for
$n \in \NN$ and $B = \{ x \in \TT : f (x) = 0 \}$. Clearly,
$\TT = \bigcup_{n \in \NN} A_n \cup B$. Since $B$ is countable and
$\mu_g$ continuous, we obtain
\[
   1 \, = \, \mu_g (\TT) \, \leqslant \sum_{n \in \NN} \mu_g (A_n) \ts ,
\]
and thus $\mu_g (A_n) > 0$ for some $n \in \NN$. For this choice of
$n$, we find
\[
  \mu_g (I) \, = \, \mu_g (f) \, \geqslant \, \tfrac{1}{n} \ts \mu_g
  (A_n) \, > \, 0 \ts .
\]
Since $k$ and $j$ were arbitrary, this implies that $F_g (x)$ is
strictly increasing in $x$.
\end{proof}

The following result provides inequalities as well as
asymptotics that immediately imply Theorem~\ref{thm:scaling}.

\begin{theorem}\label{thm:eff}
  Let\/ $g\in\cC(\TT)$ be a good\/ $g$-function with power-law
  scaling.  Let\/ $c^{}_1$, $c^{}_2$, $\theta^{}_1$ and\/
  $\theta^{}_2$ be positive constants with\/
  $\theta^{}_2 \leqslant \theta^{}_1$ such that, for all\/
  $x\in \bigl[ 0,\frac{1}{2} \bigr]$,
\begin{equation}\label{eq:lowup}
    c^{}_1 \, x^{\theta_1} \, \leqslant \,
    g(x) \, \leqslant \, c^{}_2 \, x^{\theta_2}.
\end{equation} 
Then, with\/ $s=\min\{1,c^{}_1\}$, $S=\max\{1,c^{}_2\}$ and\/
$\kappa = \mu_g \bigl( \bigl[ \frac{1}{2} , 1 \bigr] \bigr ) >0$, one
has
\[  
   \kappa \ts s \,  2^{-2\theta_1}\, x^{-\frac{\theta_1}{2}\log^{}_2 (x)}\,
  x^{\frac{5\theta_1}{2} - \ts \log^{}_2 (s)} \, < \, F_g (x) \, < \,
  x^{-\frac{\theta_2}{2}\log^{}_2 (x)}\, x^{-\frac{\theta_2}{2} -\log^{}_2 (S)} .
\]
In particular, $F_g (x)$ decays faster than any power of\/ $x$ as\/
$x\to 0^+ \!$.
\end{theorem}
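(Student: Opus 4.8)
The plan is to reduce everything to the behaviour of $F_g$ at the dyadic points $x = 2^{-k}$, where the self-similar structure of $\mu_g$ produces a clean product formula, and then to interpolate to arbitrary $x$ by monotonicity of $F_g$. First I would record the renormalisation identity already used in the proof of the preceding strict-increase theorem: specialising the computation $\mu_g\bigl([2^{-k}j,2^{-k}(j+1)]\bigr)=\mu_g(f)$ with $f(x)=2^{-k}g^{}_k\bigl(2^{-k}(j+x)\bigr)$ to $j=0$ and unfolding $g^{}_k$ gives
\[
   F_g(2^{-k}) \, = \, \mu_g(f^{}_k), \qquad
   f^{}_k(x) \, = \, \prod_{m=1}^{k} g(2^{-m}x) .
\]
The crucial point is that for $x\in[0,1)$ and $m\geqslant 1$ every argument $2^{-m}x$ lies in $\bigl[0,\frac12\bigr)$, so the power-law hypothesis \eqref{eq:lowup} applies to each factor separately.

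Next I would apply \eqref{eq:lowup} termwise. Multiplying the $k$ upper estimates $g(2^{-m}x)\leqslant c^{}_2(2^{-m}x)^{\theta_2}$ and summing the exponents via $\sum_{m=1}^{k}m = k(k+1)/2$ yields $f^{}_k(x)\leqslant c_2^{\ts k}\ts 2^{-\theta_2 k(k+1)/2}x^{\theta_2 k}$; integrating against $\mu_g$ and using $x^{\theta_2 k}\leqslant 1$ produces the dyadic upper bound $F_g(2^{-k})\leqslant c_2^{\ts k}\ts 2^{-\theta_2 k(k+1)/2}$. For the lower bound I would restrict the integral to $\bigl[\frac12,1\bigr]$, where $x\geqslant\frac12$ forces $2^{-m}x\geqslant 2^{-m-1}$, so the lower estimates $g(2^{-m}x)\geqslant c^{}_1\ts 2^{-(m+1)\theta_1}$ multiply (now with $\sum_{m=1}^{k}(m+1)=k(k+3)/2$) to $f^{}_k(x)\geqslant c_1^{\ts k}\ts 2^{-\theta_1 k(k+3)/2}$ on that set, giving $F_g(2^{-k})\geqslant \kappa\ts c_1^{\ts k}\ts 2^{-\theta_1 k(k+3)/2}$. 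Here $\kappa>0$ is guaranteed by the strict-increase theorem, since power-law scaling makes $0$ the only zero in $\bigl[0,\frac12\bigr]$ and forces $g\bigl(\frac12\bigr)\geqslant c^{}_1\ts 2^{-\theta_1}>0$.

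Finally I would pass to arbitrary $x$. Writing $u=-\log^{}_2(x)$ and choosing $k=\lfloor u\rfloor$ (so $u-1<k\leqslant u$ and $x\leqslant 2^{-k}$) for the upper bound, respectively $k=\lceil u\rceil$ (so $u\leqslant k<u+1$ and $x\geqslant 2^{-k}$) for the lower bound, monotonicity of $F_g$ transfers each dyadic estimate to $F_g(x)$. Converting $c_i^{\ts k}=2^{k\log_2(c_i)}$ and eliminating $k$ in favour of $u$ reduces the claim to the elementary exponent inequalities $k^2+3k<u^2+5u+4$ and $k^2+k>u^2-u$, together with $k\log^{}_2(c^{}_2)\leqslant u\log^{}_2(S)$ and $k\log^{}_2(c^{}_1)\geqslant (u+1)\log^{}_2(s)$, which absorb the multiplicative constants into $s=\min\{1,c^{}_1\}$ and $S=\max\{1,c^{}_2\}$; the quadratic term $\sum m = k(k+1)/2$ is exactly what generates the dominant $-\frac{\theta_i}{2}(\log^{}_2 x)^2$ contribution, from which the super-polynomial decay is immediate. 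The only genuine obstacle is the bookkeeping in this last step: one must track signs carefully, since $\log^{}_2(x)<0$, and keep the floor/ceiling rounding on the correct side of each estimate so that the strict inequalities in the statement survive.
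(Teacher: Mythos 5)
Your proposal is correct, and its computational core coincides with the paper's: the same dyadic scales, the same termwise use of \eqref{eq:lowup} on the factors $g(2^{-m}x)$ producing the exponent sums $\frac{k(k+1)}{2}$ and $\frac{k(k+3)}{2}$, the same restriction to $\bigl[\frac{1}{2},1\bigr]$ to generate $\kappa$, and the same final bookkeeping (your inequalities $k^2+k>u^2-u$, $k^2+3k<u^2+5u+4$, together with the sign conventions $\log_2(s)\leqslant 0\leqslant\log_2(S)$, are exactly what is needed to recover the stated strict bounds). Where you genuinely diverge is in how weak convergence enters. The paper never evaluates $\mu_g$ directly at this stage: it bounds the approximants $\mu_n(I_m)=\int_{I_m}g^{}_n(x)\dd x$ via the factorisation $g^{}_n(x)=g^{}_m(x)\ts g^{}_{n-m}(2^m x)$, lets $n\to\infty$, and sandwiches $F_g(x)$ for $x\in[2^{-(m+1)},2^{-m})$ between $\limsup_n \mu_n(I_{m+1})$ and $\liminf_n \mu_n(I_m)$ by the portmanteau theorem. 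You instead compress the limiting step into the exact renormalisation identity $F_g(2^{-k})=\mu_g(f^{}_k)$ with $f^{}_k(x)=\prod_{m=1}^{k}g(2^{-m}x)$ --- equivalently, the fixed-point property $(\varphi_g^k)^{}_{*}\mu_g=\mu_g$ tested on the indicator of $[0,2^{-k}]$ --- then estimate $f^{}_k$ pointwise against $\mu_g$ itself and interpolate by monotonicity of $F_g$. This buys equality at dyadic points and avoids the portmanteau sandwich; its only extra obligation is justifying that identity, since the test function has a jump at $0$ on $\TT$, which is harmless here because $\mu_g$ has no atoms (the same subtlety the paper absorbs, via continuity of $\mu_g$, in the strict-increase theorem you cite).

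One small repair: your justification of $\kappa>0$ invokes the strict-increase theorem on the grounds that power-law scaling leaves $0$ as the only zero in $\bigl[0,\frac{1}{2}\bigr]$ and gives $g\bigl(\frac{1}{2}\bigr)>0$; but that theorem requires at most countably many zeros on \emph{all} of $\TT$, and \eqref{eq:lowup} says nothing about zeros in $\bigl(\frac{1}{2},1\bigr)$, which correspond to points of $\bigl[0,\frac{1}{2}\bigr)$ where $g=1$. Goodness closes this: since $g(0)=0$, case (3) of Theorem~\ref{thm:mug} cannot hold, so $g$ falls under case (1) or (2) and has only finitely many zeros, whence the strict-increase theorem applies. (Alternatively, $\kappa=0$ together with $T$-invariance would give $\mu_g\bigl([0,2^{-k})\bigr)=1$ for all $k$, forcing $\mu_g=\delta^{}_0$ and contradicting Theorem~\ref{thm:sing}.) This is a one-line fix, not a flaw in the main argument.
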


\begin{proof}
  Consider an interval $I_m \defeq [0,2^{-m}]$ for $m \in \NN$. Let
  $\mu^{}_n = g^{}_n \leb$, and recall that the sequence
  $(g^{}_n\leb)_{n \in \NN}$ converges weakly to $\mu_g$ by
  Theorem~\ref{thm:mug}. The inequalities \eqref{eq:lowup} give upper
  and lower bounds on $g^{}_m(x)$ for $x$ close to zero. For
  $y\in[0,1]$, we have
\[
    g^{}_m(2^{-m}y) \, = \, 2^m \prod_{k=0}^{m-1}g(2^{k-m}y) 
    \, \leqslant \,  2^m \prod_{k=0}^{m-1} c^{}_2 \, 2^{(k-m) \theta_2}
    \, = \, (2\ts c^{}_2)^m \,  2^{-\frac{m (m+1)}{2} \ts \theta_2} . 
\]
   Similarly, for the lower bound, we find
\[
    g^{}_m(2^{-m}y) \, = \, 2^m \prod_{k=0}^{m-1}g(2^{k-m}y)
    \, \geqslant \, 2^m \prod_{k=0}^{m-1}c^{}_{1} \,
    2^{(k-m) \ts \theta_1} \ts y^{\theta_1} \, = \, (2\ts c^{}_1)^m\,
    2^{-\frac{m (m+1)}{2}\ts \theta_1 }\ts y^{m\ts\theta_1}. 
\]
We use these bounds on $g^{}_m (2^{-m}y)$ to establish upper and lower
bounds on $\mu_n (I_m)$, and then apply the portmanteau theorem
\cite[Thm.~2.1]{Bill} for $x\in[2^{-(m+1)},2^{-m})$; that is, that
 \[
     \limsup_{n \to \infty} \mu_n (I_{m+1}) \, \leqslant \,
      F_g (x) \, \leqslant \,
     \liminf_{n \to \infty} \mu_n (I_{m}) \ts .
 \]     
For $x\in[2^{-(m+1)},2^{-m})$, we have both
\[
   2^{-(m+1)} \, \leqslant \, x \, < \, 2^{-m}
   \qquad \text{and} \qquad
  -\log^{}_2 (x) - 1 \, \leqslant \, m \, < \, -\log^{}_2 (x) \ts . 
\]
We use these four inequalities freely in what follows.  Further, for
$x\in I_{m+1}$, we have $x< 2^{-m}$, which is the key inequality in
the proof of the lower bound.

Given $n>m$, we use $g^{}_n (x) = g^{}_m (x)\,
 g^{}_{n-m} (2^m x)$ to obtain
\[
\begin{split}
  \mu_n(I_m) \, & = \int_0^{2^{-m}} g^{}_m (x)\,
   g^{}_{n-m}(2^m x) \dd x \, = \, 2^{-m} \int_0^1 
   g^{}_m(2^{-m}y)\, g^{}_{n-m}(y) \dd y \\[2mm]
 &  \leqslant \, c_2^m \, 2^{- \frac{m (m+1)}{2}\ts \theta_2 }
  \int_0^1 g^{}_{n-m}(y) \dd y
  \, = \, c_2^m\, 2^{- \frac{m (m+1)}{2} \ts \theta_2 },
\end{split}
\]
where the last step follows because $g^{}_{n-m}$ is a probability
density on $[0,1]$. Using the last bound and the inequalities from
above, we thus get
\begin{equation}\label{eq:inf}
  \liminf_{n\to\infty}\mu_n(I_m) \, \leqslant \,
  c_2^m\, 2^{-m^2\frac{\theta_2}{2}-m\frac{\theta_2}{2}}
  \, < \,  x^{-\frac{\theta_2}{2}\log^{}_2 (x)}\,
   x^{-\frac{\theta_2}{2} -\log^{}_2 (S)}, 
\end{equation}
where $S=\max\{1, c^{}_2\}$ and thus $\log^{}_{2} (S) \geqslant 0$.

For the lower bound, we obtain
\[
\begin{split}
  \mu_n(I_m) \, & = \, 2^{-m} \int_0^1 g^{}_m (2^{-m}y)\, 
    g^{}_{n-m}(y) \dd y \, \geqslant \, c_1^m\, 
    2^{- \frac{m (m+1)}{2} \ts \theta_1 }
    \int_{0}^1 y^{m\ts \theta_1}\, g^{}_{n-m}(y) \dd y \\[2mm]
   & \geqslant \, c_1^m\, 2^{- \frac{m (m+1)}{2} \ts \theta_1}
 \int_{\frac{1}{2}}^1 2^{- m\ts \theta_1}\, g^{}_{n-m}(y) \dd y
  \: \xrightarrow{n \to \infty} \: \kappa \ts c_1^m\,
   2^{-m^2\frac{\theta_1}{2}-m\frac{3\theta_1}{2}}.
\end{split}
\]
Then, incrementing $m$, we get
\begin{equation}\label{eq:sup}
  \limsup_{n\to\infty} \mu_n(I_{m+1}) \,
  \geqslant \, \kappa \ts c_1^{m+1}\, 2^{-(m+1)^2\frac{\theta_1}{2}
      -(m+1)\frac{3\theta_1}{2}} 
      \, > \, 
    \kappa \ts s \, 2^{-2\ts \theta_1}\, x^{-\frac{\theta_1}{2}\log^{}_2 (x)}\,
    x^{\frac{5\theta_1}{2} - \ts \log^{}_2 (s)} ,
\end{equation}
where $s=\min\{1,c^{}_1\}$ and thus $\log^{}_{2} (s) \leqslant 0$.
Combining \eqref{eq:inf} with \eqref{eq:sup} and applying the
portmanteau theorem as described above provides the desired result.
\end{proof}

Theorem~\ref{thm:eff} has the following consequence.

\begin{corollary}
  Under the assumptions of Theorem~\textnormal{\ref{thm:eff}}, 
  we have
\[
  -\frac{\theta^{}_1}{2} \log^{}_2  (x)^2
  \left(1+O\left(\frac{1}{\log^{}_2 (x) }\right)\right) \,
  \leqslant \, \log \bigl( F_g(x) \bigr) \, \leqslant \,
  -\frac{\theta^{}_2}{2}\log^{}_2 (x)^2
  \left(1+O\left(\frac{1}{\log^{}_2 (x)}\right)\right)
\]  
as\/ $x\to 0^+ \!$. \qed
\end{corollary}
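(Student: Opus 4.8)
The plan is to read off the corollary directly from the two-sided estimate in Theorem~\ref{thm:eff}, since all the substantive analysis has already been carried out there; what remains is to pass to logarithms and isolate the leading quadratic term. Writing $L \defeq \log^{}_2 (x)$, which tends to $-\infty$ as $x \to 0^+$, I would first rewrite both bounding expressions as powers of $2$. For the upper bound, $x^{-\frac{\theta_2}{2}\log^{}_2 (x)} = 2^{-\frac{\theta_2}{2} L^2}$ and $x^{-\frac{\theta_2}{2} - \log^{}_2 (S)} = 2^{L (-\frac{\theta_2}{2} - \log^{}_2 (S))}$, so that the whole upper bound takes the form $2^{Q_2 (L)}$, where $Q_2 (L) = -\frac{\theta_2}{2} L^2 + b^{}_2 L$ is a quadratic in $L$ with leading coefficient $-\frac{\theta_2}{2}$. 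An analogous rewriting applies to the lower bound.

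Taking logarithms then reduces everything to comparing $\log \bigl( F_g (x) \bigr)$ with a quadratic polynomial in $L$. For the upper bound this gives $\log^{}_2 \bigl( F_g (x) \bigr) \leqslant Q_2 (L)$, and the key observation is that the linear and constant contributions to $Q_2 (L)$ are of lower order: dividing through by the leading term $-\frac{\theta_2}{2} L^2$ turns the linear term into a quantity of size $O(1/L) = O\bigl( 1/\log^{}_2 (x) \bigr)$, and any constant term into $O(1/L^2)$, which is absorbed into the same error. Factoring out $-\frac{\theta_2}{2}\log^{}_2 (x)^2$ then yields exactly the upper bound of the corollary together with its $\bigl( 1 + O(1/\log^{}_2 (x)) \bigr)$ factor.

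The lower bound is treated identically, the only new feature being the constant prefactor $\kappa \ts s \, 2^{-2\theta_1}$ appearing in Theorem~\ref{thm:eff}. Since $\kappa > 0$ holds by hypothesis and $s = \min\{1, c^{}_1\} > 0$, this prefactor is a strictly positive constant, so its logarithm is finite and merely contributes a further additive constant to the corresponding quadratic $Q_1 (L)$. Like the linear term, it is dominated by $-\frac{\theta_1}{2} L^2$ and hence swallowed by the $O\bigl( 1/\log^{}_2 (x) \bigr)$ error after normalising; factoring out $-\frac{\theta_1}{2}\log^{}_2 (x)^2$ completes the lower bound and, combined with the upper bound, establishes the two-sided asymptotic.

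I do not expect a genuine obstacle here, as the difficulty has been front-loaded into the explicit inequalities of Theorem~\ref{thm:eff}, and the remaining work is a matter of careful bookkeeping. The one point deserving attention is to confirm that every term other than the quadratic one is uniformly $O(L)$ in $L$, so that it becomes $O\bigl( 1/\log^{}_2 (x) \bigr)$ after dividing by $\log^{}_2 (x)^2$, and, relatedly, to fix the base of the logarithm consistently so that the displayed leading constants $-\theta_1/2$ and $-\theta_2/2$ emerge exactly; carrying out the computation in base $2$, as above, makes both of these transparent.
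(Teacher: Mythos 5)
Your proposal is correct and is precisely the argument the paper intends: the corollary is stated with a \qed because it follows immediately from Theorem~\ref{thm:eff} by taking logarithms (in base $2$, so the constants $-\theta_1/2$ and $-\theta_2/2$ come out exactly) and absorbing the linear-in-$\log_2(x)$ and constant terms, including $\log_2(\kappa\ts s\,2^{-2\theta_1})$, into the $O\bigl(1/\log_2(x)\bigr)$ factor. Your bookkeeping, including the observation that $\kappa, s > 0$ makes the prefactor's logarithm finite, matches the paper's intended derivation.
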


Let us see what this gives for our three guiding examples.

\begin{example}
  Consider our three examples from Example~\ref{ex:guide}.  They all
  share the additional symmetry relation $g(x) = g(1-x)$, for all
  $x \in \bigl[ 0, \frac{1}{2} \bigr]$. This implies the same symmetry
  for $g^{}_n$, for all $n \in \NN$, and hence
  $\kappa = \mu_g \bigl( \bigl[ \frac{1}{2},1 \bigr] \bigr) =
  \frac{1}{2}$.  For the TM measure $\mu_{_\mathrm{TM}}$, we have
  $g^{}_t (x)=\frac{1}{2}\bigl( 1-\cos(2\pi x) \bigr)$. For
  $x\in \bigl[ 0,\frac{1}{2} \bigr]$, one has
  $4x^2\leqslant g^{}_t(x)\leqslant \pi^2 x^2,$ so
  Theorem~\ref{thm:eff} gives
\[
   2^{-5}\, x^{-\log^{}_2 (x)}\, x^{5} \, < \, F_{g^{}_t}(x)
   \, < \,  x^{-\log^{}_2 (x)}\, x^{-1-2\log^{}_2(\pi)},
\]
which should be compared with \cite[Thm.~5.2]{BG}, where a slightly
stronger result was derived by using more specific properties of the
TM measure.

For the tent map, we have $g^{}_\wedge(x)=2\ts x$, so
\[
  2^{-3}\, x^{-\frac{1}{2}\log^{}_2 (x)}\, x^{\frac{5}{2}} \,
  < \, F_{g^{}_\wedge}(x) \, < \,
  x^{-\frac{1}{2} \log^{}_2 (x)}\, x^{-\frac{3}{2}}.
\]
Finally, for the square root map $g^{}_{\nts\sqrt{\phantom{.}}}(x)$,
we get
$\sqrt{x}\leqslant g^{}_{\nts\sqrt{\phantom{.}}}(z) \leqslant
\sqrt{2\ts x}$ for $x\in \bigl[ 0, \frac{1}{2} \bigr]$, hence
\[
  2^{-2}\, x^{-\frac{1}{4}\log^{}_2 (x)}\, x^{\frac{5}{4}} \,
  < \, F_{g^{}_{\nts\sqrt{\phantom{.}}}}(x) \, < \,
  x^{-\frac{1}{4}\log^{}_2 (x)}\, x^{-\frac{3}{4}},
\]
which shows the common structure of all three cases.
\exend
\end{example}
 
It will now be interesting to extend the full scaling analysis of
\cite{BGKS} to these guiding examples, and the general family
treated above.
\bigskip
\bigskip

\section*{Acknowledgements} 

It is a pleasure to thank Uwe Grimm and Neil Ma\~{n}ibo for helpful
discussions, and Gerhard Keller for valuable comments on the topic and
its literature.  We also thank an anonymous referee for pointing
  us to \cite{ConzeRaugi90} and both referees for helpful suggestions
  to improve the presentation. MB and PG acknowledge support by the
German Research Foundation (DFG), via the CRC 1283 at Bielefeld
University. MC and JE acknowledge the support of the Commonwealth of
Australia.

\clearpage



\begin{thebibliography}{99}

\bibitem{AS} 
J.-P.~Allouche and J.~Shallit,
\textit{Automatic Sequences}.
Cambridge University Press, Cambridge (2003).

\bibitem{Aubry}
S.~Aubry, C.~Godr\`{e}che and J.M.~Luck, 
Scaling properties of a structure intermediate between
quasiperiodic and random, 
\emph{J.\ Stat.\ Phys.} \textbf{51} (1988) 1033--1074.

\bibitem{BC}
M.~Baake and M.~Coons,
A natural probability measure derived from Stern's diatomic sequence,
\textit{Acta Arithm.} \textbf{183} (2018) 87--99.
\texttt{arXiv:1706.00187}.

\bibitem{BC-2}
M.~Baake and M.~Coons,
Scaling of the diffraction measure of $k$-free integers near the origin,
\textit{Michigan J.\ Math.} \textbf{70} (2021) 213--221.
\texttt{arXiv:1904.00279}.

\bibitem{BGKS} 
M.~Baake, P.~Gohlke, M.~Kesseb\"{o}hmer and T.~Schindler, 
Scaling properties of the \mbox{Thue{\ts}--Morse} measure.
\textit{Discr.\ Cont.\ Dynam.\ Syst.\ A} \textbf{39} (2019) 4157--4185.
\texttt{arXiv:1810.06949}.

\bibitem{TAO}
M.~Baake and U.~Grimm,
\textit{Aperiodic Order. Vol.~1:~A Mathematical Invitation},
Cambridge University Press, Cambridge (2013).

\bibitem{BG}
M.~Baake and U.~Grimm,
Scaling of diffraction intensities near the origin:\ some rigorous results, 
\textit{J.\ Stat.\ Mech.:\ Theor.\ Exp.}
\textbf{2019} 054003:1--25.
\texttt{arXiv:1905.04177}.

\bibitem{BDEG}
M.F.~Barnsley, S.G.~Demko, J.H.~Elton and J.S.~Geronimo, 
Invariant measures for Markov processes arising from iterated 
function systems with place-dependent probabilities,
\textit{Ann.\ Inst.\ H.\ Poincar\'{e} Probab.\ Statist.} 
\textbf{24} (1988) 367--394.

\bibitem{Bill}
P.~Billingsley,
\textit{Convergence of Probability Measures},
2nd ed., Wiley, New York (1999).

\bibitem{Bowen}
R.~Bowen,
\textit{Gibbs Measures and the Ergodic Theory of Anosov
Diffeomorphisms}, Springer, Berlin (2008).

\bibitem{BK93}
M.~Bramson and S.~Kalikow, 
Nonuniqueness in $g$-functions, 
\textit{Israel J.\ Math.} \textbf{84} (1993) 153--160. 

\bibitem{Grabner}
J.S.~Brauchart, P.J.~Grabner and W.B.~Kusner,
Hyperuniform point sets on the sphere: deterministic aspects,
\textit{Constr.\ Approx.}  \textbf{50} (2019) 45--61.

\bibitem{Grabner-2}
J.S.~Brauchart, P.J.~Grabner, W.B.~Kusner and J.~Ziefle,
Hyperuniform point sets on the sphere: probabilistic aspects,
\textit{Monatsh.\ Math.} \textbf{192} (2020) 763--781.
\texttt{arXiv:1809.02645}.

\bibitem{BD91}
G.~Brown and A.H.~Dooley,
Odometer actions on $G$-measures,
\textit{Ergodic Theory Dynam.\ Systs} \textbf{11} (1991) 279--307. 

\bibitem{Selecta}
Y.~Bugeaud and M.~Coons,
A Mahler miscellany, in \textit{The Legacy of Kurt Mahler:
A Mathematical Selecta}, eds.\ M.~Baake, Y.~Bugeaud and
M.~Coons, Documenta Mathematica, Bielefeld (2020), pp.~179--189.

\bibitem{BurtonKeller93}
R.M.~Burton and G.~Keller,
Stationary measures for randomly chosen maps,
\textit{J.\ Theoret.\ Probab.} \textbf{6} (1993) 1--16. 

\bibitem{CDF92}
A.~Cohen, I.~Daubechies and J.-C.~Feauveau,
Biorthogonal bases of compactly supported wavelets,
\textit{Commun.\ Pure Appl.\ Math.} \textbf{45} (1992) 485--560. 

\bibitem{ConzeRaugi90}
J.-P.~Conze and A.~Raugi,
Fonctions harmoniques pour un op\'{e}rateur de transition et applications,
\textit{Bull.\ Soc.\ Math.\ France} \textbf{118} (1990) 273--310. 

\bibitem{DF37}
W.~Doeblin and R.~Fortet,
Sur des cha\^{i}nes \`{a} liaisons compl\`{e}tes. 
\textit{Bull.\ Soc.\ Math.} France \textbf{65} (1937) 132--148.

\bibitem{DubF66}
L.E.~Dubins and D.A.~Freedman,
Invariant probabilities for certain Markov processes.
\textit{Ann.\ Math.\ Stat.} \textbf{37} (1966) 837--848. 

\bibitem{Fan97}
A.H.~Fan,
Multifractal analysis of infinite products, 
\textit{J.\ Stat.\ Phys.} 86 (1997) 1313--1336.

\bibitem{Fan96}
A.H.~Fan, 
On uniqueness of $G$-measures and $g$-measures,
\textit{Studia Math.} \textbf{119} (1996) 255--269.  

\bibitem{FL98}
A.H.~Fan and K.-S.~Lau,
Asymptotic behavior of multiperiodic functions 
$G(x)=\prod_{n=1}^{\infty}g(x/2n)$,
\textit{J.\ Fourier Anal.\ Appl.} \textbf{4} (1998) 129--150. 

\bibitem{FGP20}
R.F.~Ferreira, S.~Gallo and F.~Paccaut, 
Non-regular $g$-measures and variable length memory chains. 
\textit{Nonlinearity} \textbf{33} (2020) 6026--6052.  
\texttt{arXiv:1907.02442}.

\bibitem{GL}
C.~Godr\`{e}che and J.M.~Luck, 
Multifractal analysis in reciprocal space and the nature of
the Fourier transform of self-similar structures,
\textit{J.\ Phys.\ A:\ Math.\ Gen.} \textbf{23} (1990) 3769--3797.

\bibitem{Diss}
P.~Gohlke,
\textit{Aperiodic Order and Singular Spectra},
PhD thesis, Bielefeld University (2021), in preparation.

\bibitem{Harris}
T.E.~Harris, 
On chains of infinite order.
\textit{Pacific J.\ Math.} \textbf{5} (1955) 707--724.

\bibitem{JOP10}
A.~Johansson, A.~\"{O}berg and M.~Pollicott, 
Unique Bernoulli $g$-measures.
\textit{J.\ Eur.\ Math.\ Soc.} \textbf{14} (2012) 1599--1615.
\texttt{arXiv:1004.0650}.

\bibitem{Keane}
M.~Keane,
Strongly mixing $g$-measures,
\textit{Inv.\ Math.} \textbf{16} (1972) 309--324.

\bibitem{Ledrappier}
F.~Ledrappier,
Principe variationnel et syst\`emes dynamiques symboliques,
\textit{Z.\ Wahrscheinlichkeitsth.\ Verw.\ Gebiete}
\textbf{30} (1974) 185--202.

\bibitem{Mahler}
K.~Mahler,
The spectrum of an array and its application to the study of the
translation properties of a simple class of arithmetical functions.
Part II: On the translation properties of a simple class of
arithmetical functions, \textit{J.\ Math.\ Massachusetts}
\textbf{6} (1927) 158--163.

\bibitem{Josh1}
E.C.~O\u{g}uz, J.E.S.~Socolar, P.J.~Steinhardt and S.~Torquato,
Hyperuniformity of quasicrystals,
\textit{Phys.\ Rev.\ B} \textbf{95} (2017) 054119:1--10.
\texttt{arXiv:1612.01975}.

\bibitem{Josh2}
E.C.~O\u{g}uz, J.E.S.~Socolar, P.J.~Steinhardt and S.~Torquato,
Hyperuniformity and anti-hyperuniformity in one-dimensional
substitution tilings,
\textit{Acta Cryst.\ A} \textbf{75} (2019) 3--13.
\texttt{arXiv:1806.10641}.

\bibitem{Olivier99}
E.~Olivier, 
Multifractal analysis in symbolic dynamics and distribution of 
pointwise dimension for $g$-measures,
\textit{Nonlinearity} \textbf{12} (1999) 1571--1585. 

\bibitem{Q}
M.~Queff\'{e}lec,
\textit{Substitution Dynamical Systems:\ Spectral Analysis},
2nd ed., LNM 1294, Springer, Berlin (2010).

\bibitem{RS}
N.~Rome and E.~Sofos,
Asymptotics of the $k$-free diffraction measure via discretisation,
\textit{Bull.\ London Math.\ Soc.}, in press.
\texttt{arXiv:1907.04845}.

\bibitem{Ruelle}
D.~Ruelle,
Statistical mechanics of a one-dimensional lattice gas,
\textit{Commun.\ Math.\ Phys.} \textbf{9} (1968) 267--278.

\bibitem{Stenflo}
\"{O}.~Stenflo, Uniqueness of invariant measures for place-dependent 
random iterations of functions,  
\textit{IMA Vol.\ Math.\ Appl.} \textbf{132} (2002) 13--32.

\bibitem{TS}
S.~Torquato and F.~Stillinger, 
Local density fluctuations, hyperuniformity, and order metrics, 
\textit{Phys.\ Rev.\ E} \textbf{68} (2003) 041113:1--25 and 
069901 (Erratum).
\texttt{arXiv:cond-mat/0311532}.

\bibitem{Walters}
P.~Walters, 
Ruelle's operator theorem and $g$-measures. 
\textit{Trans.\ Amer.\ Math.\ Soc.} \textbf{214} (1975) 375--387.

\bibitem{Wiener}
N.~Wiener,
The spectrum of an array and its application to the study of the
translation properties of a simple class of arithmetical functions.
Part I: The spectrum of an array, \textit{J.\ Math.\ Massachusetts}
\textbf{6} (1927) 145--157.

\bibitem{Zygmund}
A.~Zygmund,
\textit{Trigonometric Series}, combined 3rd ed.,
Cambridge University Press, Cambridge (2002).

\end{thebibliography}
\end{document}